\renewcommand{\citepunct}{;\ }
\newtheorem{theorem}{Theorem}[section]
\newtheorem{corollary}[theorem]{Corollary}
\newtheorem{question}[theorem]{Question}
\newtheorem{proposition}[theorem]{Proposition}
\newtheorem{alphthm}{Theorem}
\newtheorem{alphcor}[alphthm]{Corollary}
\theoremstyle{definition}
\newtheorem{definition}[theorem]{Definition}
\newtheorem{setup}[theorem]{Setup}
\theoremstyle{remark}
\newtheorem{remark}[theorem]{Remark}
\newtheoremstyle{cited}{.5\baselineskip\@plus.2\baselineskip\@minus.2\baselineskip}{.5\baselineskip\@plus.2\baselineskip\@minus.2\baselineskip}{\itshape}{}{\bfseries}{\bfseries .}{5pt plus 1pt minus 1pt}{\thmname{#1}\thmnumber{ #2}\thmnote{ \normalfont#3}}
\theoremstyle{cited}
\newtheorem{citedthm}[theorem]{Theorem}
\newtheoremstyle{citeddef}{.5\baselineskip\@plus.2\baselineskip\@minus.2\baselineskip}{.5\baselineskip\@plus.2\baselineskip\@minus.2\baselineskip}{}{}{\bfseries}{\bfseries .}{5pt plus 1pt minus 1pt}{\thmname{#1}\thmnumber{ #2}\thmnote{ \normalfont#3}}
\theoremstyle{citeddef}
\newtheorem{citeddef}[theorem]{Definition}
\DeclareMathOperator{\Spec}{Spec}
\newcommand{\CC}{\mathbb{C}}
\newcommand{\NN}{\mathbb{N}}
\newcommand{\QQ}{\mathbb{Q}}
\newcommand{\cO}{\mathcal{O}}
\newcommand{\fm}{\mathfrak{m}}
\newcommand{\redpair}[1]{#1, \Sigma_{#1}}
\DeclarePairedDelimiterX\Set[1]\{\}{#1}
\newcommand{\hooklongrightarrow}{\lhook\joinrel\longrightarrow}
\newcommand{\longtwoheadrightarrow}{\mathrel{\text{\longtwo@rightarrow}}}
\newcommand{\two@rightarrow}{%
  \sbox0{$\m@th\rightarrow$}%
  \smash{\rlap{\kern0.1\wd0 \clipbox{{.3\width} {-\height} 0pt {-\height}}{$\m@th\rightarrow$}}}%
  $\m@th\rightarrow$%
}
\newcommand{\longtwo@rightarrow}{%
  \sbox0{$\m@th\longrightarrow$}%
  \smash{\rlap{\kern0.175\wd0 \clipbox{{.3\width} {-\height} 0pt {-\height}}{$\m@th\longrightarrow$}}}%
  $\m@th\longrightarrow$%
}
\begin{document}
\title[Pure subrings of Du Bois singularities]{Pure subrings of
Du Bois singularities\\are Du Bois singularities}
\author{Charles Godfrey}
\address{AKASA\\South San Francisco, CA 94080\\USA}
\email{\href{mailto:godfrey.cw@gmail.com}{godfrey.cw@gmail.com}}
\urladdr{\url{https://godfrey-cw.github.io/}}
\thanks{The
first author was partially supported by the University of Washington Department
of Mathematics Graduate Research Fellowship, and by the NSF grant DMS-1440140,
administered by the Mathematical Sciences  Research Institute, while in
residence  at  MSRI during the program Birational Geometry and Moduli Spaces.}
\author{Takumi Murayama}
\address{Department of Mathematics\\Purdue University\\West Lafayette, IN
47907-2067\\USA}
\email{\href{mailto:murayama@purdue.edu}{murayama@purdue.edu}}
\urladdr{\url{https://www.math.purdue.edu/~murayama/}}
\thanks{The second author was
supported by the National Science Foundation under Grant Nos.\ DMS-1701622,
DMS-1902616, and DMS-2201251.}

\subjclass[2020]{Primary 14B05, 14J17; Secondary 14L30, 14F20, 14F17}

\keywords{Du Bois singularities, Boutot's theorem, log canonical singularities,
pure subring, key injectivity, h topology}

\makeatletter
  \hypersetup{
    pdfauthor={Charles Godfrey and Takumi Murayama}
    pdftitle={Pure subrings of Du Bois singularities are Du Bois singularities},
    pdfsubject=\@subjclass,pdfkeywords=\@keywords
  }
\makeatother

\begin{abstract}
  Let \(R \to S\) be a cyclically pure map of Noetherian \(\mathbb{Q}\)-algebras.
  In this paper, we show that if \(S\) has Du Bois singularities, then \(R\)
  has Du Bois singularities.
  Our result is new even when \(R \to S\) is faithfully flat.
  Our proof also yields interesting results in prime characteristic and in mixed
  characteristic.
  As a consequence, we show that if \(R \to S\) is a cyclically pure map of
  rings essentially of finite type over the complex numbers \(\mathbb{C}\),
  \(S\) has log canonical type singularities,
  and \(K_R\) is Cartier, then \(R\) has log canonical singularities.
  Along the way, we prove a version of the
  key injectivity theorem of Kov\'acs and Schwede for
  Noetherian schemes of equal characteristic zero that have isolated
  non-Du Bois points.
  Throughout the paper, we use the characterization of the complex
  \(\underline{\Omega}^0_X\) and of Du Bois singularities in terms of
  sheafification with respect to Grothendieck topologies.
\end{abstract}

\maketitle 

\section{Introduction}
\subsection{Background}
Let \(R \to S\) be a \textsl{cyclically pure}
map of rings, which we recall means that
\(IS \cap R = I\) for every ideal \(I \subseteq R\) \cite[p.\
463]{Hoc77}.
Cyclically pure maps seem to have been first studied by
Besserre \cite{Bes62,Bes67}, who calls them \textsl{good} ring maps.
Examples of cyclically pure maps include inclusions of rings of invariants by
linearly reductive group actions, split maps, and faithfully flat maps.
By \cite{HR74,Kem79,HH95,HM18}, if \(S\) is regular, then \(R\) is
Cohen--Macaulay.\smallskip
\par An interesting question arising from \cite{HR74} and these subsequent results
is the following:
\begin{question}\label{ques:cycpure}
  If \(R \to S\) is a cyclically pure map of rings, what 
  properties descend from \(S\) to \(R\)?
\end{question}
Here we say a property \(\mathbf{P} \) \textsl{descends} from \(S\) to \(R\) if whenever
\(S\) satisfies \(\mathbf{P}\), so does \(R\). Note that the preceding example is not precisely
a descending property since the property on \(S \) is regularity
whereas the property on \(R\) is Cohen--Macaulayness (and
neither regularity nor Cohen--Macaulayness descend along
cyclically pure maps in general \cite[\S2]{HR74}). On the other hand, Noetherianity
and normality descend from \(S\) to \(R\)
\cite[Proposition 6.15]{HR74}.\smallskip
\par Question \ref{ques:cycpure} has attracted particular attention for different
classes of singularities.
For example:
\begin{enumerate}[label=(\textup{\Roman*})]
  \item Boutot showed that if \(R\) and \(S\) are essentially of finite type
    over a field of characteristic zero
    and \(S\) has rational singularities, then \(R\)
    has rational singularities \cite[Th\'eor\`eme on p.\ 65]{Bou87}.
    Using techniques from this paper, the second author showed Boutot's result
    holds more generally for Noetherian \(\QQ\)-algebras \cite[Theorem
    C]{MurKV}. For other extensions of Boutot's result, see
    \cite{Smi97,Sch08,HM18}.
  \item Recently, Zhuang showed that
    if \(R\) and \(S\) are essentially of finite type over an algebraically
    closed field of characteristic zero, and \(S\) has klt type singularities,
    then \(R\) has klt type singularities \cite[Theorem 1.1]{Zhuang}.
    In the appendix to \cite{Zhuang},
    Lyu extended Zhuang's result to maps of excellent \(\QQ\)-algebras with
    dualizing complexes \cite[Theorem A.1]{Zhuang}.
    These results extend results from \cite{Kaw84,Sch05,BGLM}.
\end{enumerate}
\subsection{Main results}
\par In this paper, we answer Question \ref{ques:cycpure} for Du Bois
singularities by proving the following analogue of Boutot's theorem.
\begin{alphthm}\label{thm:boutotdb}
  Let \(R \to S\) be a cyclically pure map of Noetherian
  \(\mathbb{Q}\)-algebras.
  If \(S\) has Du Bois singularities with respect to the h topology, then \(R\)
  has Du Bois singularities with respect to the h topology.
\end{alphthm}
See Definition \ref{def:dbviatop} for our definition of Du Bois singularities,
which coincides with the usual notion for rings essentially of finite type over
\(\mathbb{C}\) \cite[(3.5)]{Ste83} and for quasi-excellent
\(\mathbb{Q}\)-algebras \cite[Definition 7.3.1]{Murinj}.
See Proposition \ref{prop:sameashyperresdef}.
For quasi-excellent \(\QQ\)-algebras, our definition is also equivalent to the
characteristic-free definition of Du Bois singularities due to Huber and Kelly
\cite[Definition 7.19]{Hub16}.
In fact, our proof of Theorem \ref{thm:boutotdb} yields interesting results in
prime characteristic and even in mixed characteristic.
See Theorem \ref{cor:dbdescends}.
In prime characteristic, we recover results on descent of \(F\)-injectivity
under faithfully flat, quasi-finite and pure, or strongly pure maps due to Datta
and the second author \cite[Theorem 3.8 and Proposition 3.9]{DM}.\smallskip
\par Theorem \ref{thm:boutotdb} is new even when \(R \to S\) is faithfully flat.
For rings essentially of finite type over \(\CC\),
the case when \(R \to S\) splits is due to Kov\'acs \cite[Corollary 2.4]{Kov99},
who also proves that Du Bois singularities descend along morphisms \(f\colon Y
\to X\) of schemes when \(\cO_X \to \mathcal{R} f_*\cO_Y\) splits in the derived
category.
In Theorem \ref{cor:dbdescends},
we extend Kov\'acs's result to morphisms for which
\(\cO_X \to \mathcal{R} f_*\cO_Y\) induces injective maps on local cohomology,
and also prove a
version for Du Bois pairs.
\par Theorem \ref{thm:boutotdb} contrasts with the situation in
prime characteristic.
The prime characteristic analogue of Du Bois singularities is the notion of
\(F\)-injectivity \cite{Sch09}.
Question \ref{ques:cycpure} has a negative answer for \(F\)-injectivity
\cite[Example 3.3(1)]{Wat97}, even if the map \(R
\to S\) splits as a ring map \cite[Example 6.6 and Remark 6.7]{Ngu12}.
However, as mentioned above,
Question \ref{ques:cycpure} does hold for \(F\)-injectivity under the
additional hypothesis that \(R \to S\) is either quasi-finite or strongly pure
\cite[Theorem 3.8 and Proposition 3.9]{DM}.\smallskip
\par As a consequence of Theorem \ref{thm:boutotdb}, we prove the following special
cases of a question of
Zhuang \cite[Question 2.10]{Zhuang} (cf.\ \citeleft\citen{Sch05}\citemid
(3.14)\citepunct \citen{BGLM}\citemid Question 8.5\citeright), who asked
whether Question \ref{ques:cycpure} holds for log canonical type singularities.
Statement \((\ref{cor:boutotlcqgorqcartier})\) below is the analogue of
\cite[Lemma 2.3]{Zhuang} for log canonical type singularities.
We say that a normal complex variety \(X\)
with canonical divisor \(K_X\) has \textsl{log
canonical type singularities} if for some effective \(\QQ\)-divisor \(\Delta\),
the pair \((X,\Delta)\) is log canonical.
\begin{alphcor}\label{cor:boutotlcqgor}
  Let \(R \to S\) be a cyclically pure map of essentially of finite type
  \(\CC\)-algebras.
  \begin{enumerate}[label=\((\roman*)\),ref=\roman*]
    \item\label{cor:boutotlcqgorcartier}
      If \(S\) is normal and has Du Bois singularities (for example, if
      \(S\) has log canonical type singularities) and \(K_R\) is Cartier,
      then \(R\) has log canonical singularities.
    \item\label{cor:boutotlcqgorqcartier}
      Let \(f\colon Y \to X\) denote the morphism of affine schemes
      corresponding to \(R \to S\).
      Let \(U\) be the Cartier locus of \(K_X\).
      If \(Y\) has log canonical type singularities, \(K_X\) is
      \(\QQ\)-Cartier, and \(Y \setminus f^{-1}(U)\) has codimension at least
      two in \(Y\), then \(X\) has log canonical singularities.
  \end{enumerate}
\end{alphcor}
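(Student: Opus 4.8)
The plan is to deduce both parts from Theorem~\ref{thm:boutotdb} together with two ingredients: that normality descends along cyclically pure maps \cite[Proposition~6.15]{HR74}, and that a normal complex variety whose canonical divisor is Cartier has log canonical singularities if and only if it has Du Bois singularities --- the implication we need being that ``Du Bois and \(K\) Cartier'' forces log canonical (see \cite{Kov99}). \textbf{Part (i).} Write \(X = \Spec R\) and \(Y = \Spec S\). Since \(R \to S\) is cyclically pure and \(S\) is normal, \(R\) is normal; since \(S\) is Du Bois, Theorem~\ref{thm:boutotdb} shows \(R\) is Du Bois; and since \(K_R\) is Cartier, the characterization above shows \(R\) has log canonical singularities. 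The parenthetical holds because a variety with log canonical type singularities is normal by definition and Du Bois by the theorem of Koll\'ar and Kov\'acs that the underlying space of a log canonical pair is Du Bois.

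\textbf{Part (ii).} Again \(X\) is normal, and since \(Y\) has log canonical type --- hence Du Bois --- singularities, Theorem~\ref{thm:boutotdb} shows \(X\) is Du Bois. Applying part (i) to the localizations \(R_{\mathfrak p} \to S_{\mathfrak p}\) at primes \(\mathfrak p\) lying in \(U\) --- still cyclically pure, with normal Du Bois target, and with \(K_{R_{\mathfrak p}}\) Cartier --- shows \(X\) has log canonical singularities at every point of \(U\). Since \(X\) is normal, \(X \setminus U\) has codimension at least two in \(X\); but this alone does not force \(X\) to be log canonical, so the points of \(X \setminus U\) must be handled separately. To that end, pass to the index-one cover.

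Let \(\pi\colon X' \to X\) be the index-one cover associated to \(K_X\): a finite morphism with \(X'\) normal, \(K_{X'}\) Cartier, \(\pi^{*}K_X = K_{X'}\), and \(\pi\) \'etale over \(U\), hence \'etale in codimension one. Let \(Y' = \Spec S'\) be the normalization of \(X' \times_X Y\), with induced finite map \(\nu\colon Y' \to Y\) and ring map \(R' \to S'\) (where \(X' = \Spec R'\)); all of these remain essentially of finite type over \(\CC\). Two points require checking. First, \(R' \to S'\) is again cyclically pure: since \(S\) is reduced and excellent, the cyclically pure map \(R \to S\) is in fact pure \cite{Hoc77}, purity is preserved under the base change along \(R \to R'\), and one checks it also survives the normalization step (the locus where \(R' \otimes_R S\) fails to be normal, and the branch locus discussed next, have codimension at least two). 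Second, the branch locus of \(\nu\) lies over the branch locus of \(\pi\), which lies in \(X \setminus U\); hence it lies in \(Y \setminus f^{-1}(U)\), which by hypothesis has codimension at least two in \(Y\). Thus \(\nu\) is \'etale in codimension one, so \(K_{Y'} = \nu^{*}K_Y\); and if \((Y, \Gamma)\) is a log canonical pair witnessing the log canonical type singularities of \(Y\), then \(K_{Y'} + \nu^{*}\Gamma = \nu^{*}(K_Y + \Gamma)\) is \(\QQ\)-Cartier and \((Y', \nu^{*}\Gamma)\) is again log canonical, so \(Y'\) has log canonical type --- hence Du Bois --- singularities. Now Theorem~\ref{thm:boutotdb} applied to \(R' \to S'\) shows \(X'\) is Du Bois, and since \(K_{X'}\) is Cartier the characterization above shows \(X'\) has log canonical singularities. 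Finally, \(\pi\) is finite with \(\pi^{*}K_X = K_{X'}\) and \(X\) is normal and \(\QQ\)-Gorenstein, so log canonicity descends from \(X'\) to \(X\); hence \(X\) has log canonical singularities.

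\textbf{Main obstacle.} Part (i) is immediate once Theorem~\ref{thm:boutotdb} is available. The substance of part (ii) is the passage to the index-one cover, and within it the delicate point is that the cyclically pure hypothesis must be inherited by \(R' \to S'\): cyclic purity is not visibly stable under the non-flat base change along the finite map \(R \to R'\), so one first upgrades cyclic purity to purity --- legitimate because the rings are reduced and excellent --- and then pushes purity through the base change and the normalization. Tracking codimensions so that the covers are \'etale in codimension one is routine, but it is exactly what makes log canonical type singularities ascend to \(Y'\), and is the only place where the codimension hypothesis on \(Y\) is used.
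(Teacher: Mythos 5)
Your part (i) and the overall architecture of your part (ii) --- pass to the index-one cover \(\pi\colon X' \to X\), show that \(Y'\) has log canonical type singularities because \(\pi'\colon Y' \to Y\) is \'etale in codimension one, apply part (i) to \(R' \to S'\), and descend log canonicity via \cite[Proposition 5.20]{KM98} --- coincide with the paper's proof, which adapts \cite[Lemma 2.3]{Zhuang}. However, there is a genuine gap at precisely the step you single out as delicate: the purity of \(R' \to S'\). You correctly upgrade cyclic purity of \(R \to S\) to purity via \cite{Hoc77}, and purity is indeed preserved by base change, so \(R' \to R' \otimes_R S\) is pure. But \(S'\) is not \(R' \otimes_R S\): it is the normalization of the components of \(X' \times_X Y\) dominating \(X'\). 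The map \(R' \otimes_R S \to S'\) discards components and then normalizes; it need not be injective, and even when it is, a pure map followed by an injection need not be pure. Your parenthetical --- that the non-normal locus and the branch locus have codimension at least two --- does not by itself repair this; ``one checks it also survives the normalization step'' is exactly the assertion that requires proof.

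The paper closes this gap by a different mechanism that never compares \(S'\) with \(R' \otimes_R S\). After localizing at a closed point so that \(\cO_X(rK_X) \cong \cO_X\) (a reduction you also need before the index-one cover of \cite[Definition 5.19]{KM98} is even defined), it writes both coordinate rings explicitly as cyclic cover algebras,
\[
  H^0(X',\cO_{X'}) = \left.\Biggl(\bigoplus_{m \in \NN}
  H^0\bigl(U,\cO_U(mK_U)\bigr)\cdot t^m\Biggr)\middle/(st^r-1)\right.,
  \qquad
  H^0(Y',\cO_{Y'}) = \left.\Biggl(\bigoplus_{m \in \NN}
  H^0\bigl(V,\cO_V(mf^*K_U)\bigr)\cdot t^m\Biggr)\middle/(f^*s\,t^r-1)\right.,
\]
where the codimension-two hypotheses on \(X\setminus U\) and on \(Y\setminus f^{-1}(U)\) are used precisely to extend sections from \(U'\) and \(V'\) to all of \(X'\) and \(Y'\). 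Purity of the resulting map of graded algebras is then \cite[Lemma 2.2]{Zhuang}. So your instinct about where the codimension hypothesis enters is right, but the actual argument is this explicit identification together with Zhuang's purity lemma for cyclic covers, not a general ``purity survives base change and normalization'' principle. To complete your proof you would need to supply this computation (or an equivalent substitute for it).
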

\subsection{Outline}
To prove Theorem \ref{thm:boutotdb}, we develop
a characteristic-free notion of
Du Bois singularities that is essentially due to Huber and Kelly
\cite[Definition 7.19]{Hub16} in Section \ref{sect:dbgrotop}.
The key idea behind their definition is that although the usual definition of
\(\underline{\Omega}^0_X\) appearing in the definition of Du Bois singularities
does not make sense without the existence of resolutions of singularities, an
alternative description of \(\underline{\Omega}^0_X\) as the derived pushforward
of the sheafification of
\(\cO_X\) with respect to the h topology (in equal characteristic zero) can be
made in arbitrary characteristic.
This description is originally due to Lee \cite[Theorem 4.16]{Lee09} (see also
\cite[Corollary 6.16 and Theorem 7.12]{HJ14}).\medskip
\par In Section \ref{sect:dbinchar0}, we characterize Du Bois singularities in
terms of injectivity of maps on local cohomology modules in equal characteristic
zero (Theorem \ref{thm:splittingcriterion}).
The key ingredient is the following version of the key injectivity
theorem of Kov\'acs and Schwede 
\citeleft\citen{KS16deforms}\citemid Theorem 3.3\citepunct \citen{KS16}\citemid 
Theorem 3.2\citepunct \citen{MSS17}\citemid Lemma 3.2\citepunct
\citen{KK20}\citemid Theorem 6.3 and Corollary 6.5\citeright, which is currently
known for schemes essentially of finite type over a field of characteristic
zero.
\par We prove a version of their key injectivity theorem for Noetherian schemes of
equal characteristic zero that have isolated non-Du Bois points.
For varieties, this special case of the key injectivity theorem is also due to
Kov\'acs \citeleft\citen{Kov99}\citemid Proof of Lemma 2.2\citepunct
\citen{Kov00}\citemid Lemma 1.4\citeright\ and Schwede \cite[Proposition
5.11]{Sch09}.
The reason it is called an injectivity theorem is that the usual statement for
complex varieties is the Matlis dual of the surjectivity statement below.
\begin{alphthm}\label{thm:keyinjintro}
  Let \(X\) be a separated Noetherian scheme of equal characteristic zero.
  Let \(x \in X\) be a point such that \(\Spec(\cO_{X,x}) \setminus
  \{x\}\) is Du Bois
  with respect to the h topology.
  Then, for every \(i\), the natural morphism
  \[
    H^i_x\bigl(\Spec(\cO_{X,x}),\mathcal{O}_{X,x}\bigr)
    \longrightarrow \mathbb{H}^i_x\bigl(\Spec(\cO_{X,x}),
    \underline{\Omega}^0_{X,\mathrm{h},x}\bigr).
  \]
  is surjective.
\end{alphthm}
In fact, we prove a more general version of Theorem \ref{thm:keyinjintro} for
pairs.
See Theorem \ref{thm:keyinj} for the full statement.
For the proof, we construct an absolute version of the Nagata compactification of
\(X\) using the notion of the Zariski--Riemann space associated to a pair
\cite{FK06,Tem10,FK18} (see Definition \ref{def:zrforpairs}).
This compactification 
is a locally ringed space \(\langle X \rangle_{\mathrm{cpt}}\) that contains
\(X\) as a subspace whose complement is ind-constructible, and is constructed as
the inverse limit of proper schemes over
\(\mathbb{Q}\).
Using the long exact sequence on local cohomology, we reduce Theorem
\ref{thm:keyinjintro} to a surjectivity statement for cohomology on
\(\langle X \rangle_{\mathrm{cpt}}\).
This last surjectivity statement is a consequence of the degeneration of the
Hodge-to-de Rham spectral sequence \citeleft\citen{Del74}\citemid
\S8.2\citepunct \citen{DB81}\citemid Th\'eor\`eme 4.5\((iii)\)\citeright\ 
and Grothendieck's limit theorem for inverse limits of
toposes \cite[Expos\'e VI, Corollaire 8.7.7]{SGA42}.\smallskip
\par Finally, in Section \ref{sect:mainresultproofs}, we prove Theorem
\ref{thm:boutotdb}, its generalization to pairs
(Theorem \ref{cor:dbdescends}), and
Corollary \ref{cor:boutotlcqgor}.

\subsection*{Conventions}
All rings are commutative with identity, and all ring maps are unital.
A map \(\varphi\colon M \to M'\) of \(R\)-modules over a ring \(R\) is \textsl{pure} if the
base change
\[
  \mathrm{id} \otimes_R \varphi\colon
  N \otimes_R M \longrightarrow N \otimes_R M'
\]
is injective for every
\(R\)-module \(N\) \citeleft\citen{Coh59}\citemid p.\ 383\citepunct
\citen{Oli70}\citemid D\'efinition 1.1\citeright.

\subsection*{Acknowledgments}
We would like to thank
Donu Arapura,
Rankeya Datta,
J\'anos Koll\'ar,
S\'andor J. Kov\'acs,
Linquan Ma,
Kenji Matsuki,
Joaqu\'in Moraga,
Karl Schwede, and
Farrah Yhee
for helpful discussions.
We also thank the referee for their helpful comments.

\section{Du Bois pairs via Grothendieck topologies}\label{sect:dbgrotop}
\subsection{Grothendieck topologies}
To define Du Bois pairs in full generality, we will need to work with the rh,
cdh, eh, or h topologies.
We define these Grothendieck topologies and other topologies that appear in
their definitions following \cite[Definition 2.5]{GK15}.
\begin{definition}\label{def:grotops}
  Let \(S\) be a separated Noetherian scheme.
  We consider the following topologies on a small category of
  schemes that are of finite type over \(S\), constructed as in
  \cite[\href{https://stacks.math.columbia.edu/tag/020M}{Tag
  020M}]{stacks-project}.
  We will consider finite families
  \begin{equation}\label{eq:gk155}
    \Set[\big]{f_i \colon U_i \longrightarrow X}_{i=1}^n
  \end{equation}
  of morphisms in \(\mathrm{Sch}/S\) in the definitions below.
  \begin{enumerate}[label=\((\roman*)\),ref=\roman*]
    \item The \textsl{Zariski topology}, denoted by \(\mathrm{Zar}\), is
      generated by families \eqref{eq:gk155} which are jointly
      surjective and where each \(f_i\) is an open immersion.
      We allow \(n = 0\) so that the empty family is a covering family for
      the empty scheme.
    \item \cite[\S1.1]{Nis89}
      The \textsl{Nisnevich topology}, denoted by \(\mathrm{Nis}\), is
      generated by \textsl{completely decomposed} families \eqref{eq:gk155}
      where each \(f_i\) is \'etale.
      Here, a family \eqref{eq:gk155} is \textsl{completely decomposed} if for
      every \(x \in X\) there are an index \(i\) and a point \(u \in U_i\) such
      that \(f_i(u) = x\) and \([k(u) : k(x)] = 1\).
    \item The \textsl{\'etale topology}, denoted by \(\mathrm{\acute{E}t}\), is
      generated by families \eqref{eq:gk155} which are jointly
      surjective and where each \(f_i\) is a \'etale.
    \item \cite[Definition 18.3]{Ful98}
      The \textsl{cdp topology}, denoted by \(\mathrm{cdp}\), is generated
      by completely decomposed families \eqref{eq:gk155} where each \(f_i\) is
      proper.
      Such a family where \(n = 1\) is called a \textsl{cdp morphism}
      \cite[Definition 2.10]{HKK17}.
      Note that cdp morphisms are called \textsl{envelopes} in \cite[Definition
      18.3]{Ful98} and \textsl{proper cdh covers} in \cite[Definition 5.7]{SV00}.
    \item The \textsl{finite topology}, denoted by \(\mathrm{f}\), is generated
      by families \eqref{eq:gk155} which are jointly surjective and
      where each \(f_i\) is finite.
    \item The \textsl{proper topology}, denoted by \(\mathrm{prop}\), is
      generated by families \eqref{eq:gk155} which are jointly surjective and
      where each \(f_i\) is proper.
    \item \cite[Definition 1.2]{GL01}
      The \textsl{rh topology}, denoted by \(\mathrm{rh}\), is the coarsest
      topology finer than both \(\mathrm{Zar}\) and \(\mathrm{cdp}\).
    \item \cite[Definition 5.7]{SV00}
      The \textsl{cdh topology}, denoted by \(\mathrm{cdh}\), is the coarsest
      topology finer than both \(\mathrm{Nis}\) and \(\mathrm{rh}\).
    \item \cite[Definition 2.1]{Gei06}
      The \textsl{eh topology}, denoted by \(\mathrm{eh}\), is the coarsest
      topology finer than both \(\mathrm{\acute{E}t}\) and \(\mathrm{rh}\).
    \item \cite[Definition 3.1.2 and Lemma 3.4.2]{Voe96}
      The \textsl{qfh topology}, denoted by \(\mathrm{qfh}\), is the coarsest
      topology finer than both \(\mathrm{\acute{E}t}\) and \(\mathrm{f}\).
    \item \cite[Definition 6.2]{HKK17}
      The \textsl{sdh topology}, denoted by \(\mathrm{sdh}\), is the coarsest
      topology finer than both \(\mathrm{\acute{E}t}\) and the topology
      generated by \textsl{separably decomposed} families \eqref{eq:gk155}.
      Here, a family \eqref{eq:gk155} is \textsl{separably decomposed} if
      each \(f_i\) is proper and if, for
      every \(x \in X\), there are an index \(i\) and a point \(u \in U_i\) such
      that \(f_i(u) = x\) and \(k(u)/k(x)\) is finite separable.
    \item\label{def:grotophtop}
      \citeleft\citen{Voe96}\citemid Definition 3.1.2\citepunct
      \citen{GL01}\citemid Definition 1.1 and Theorem 4.1\citeright\ 
      The \textsl{h topology}, denoted by \(\mathrm{h}\), is the coarsest
      topology finer than both \(\mathrm{Zar}\) and \(\mathrm{prop}\).
      Note that this is what \cite{GL01} and
      \cite[\href{https://stacks.math.columbia.edu/tag/0DBC}{Tag
      0DBC} and \href{https://stacks.math.columbia.edu/tag/0ETQ}{Tag 0ETQ}]{stacks-project}
      call the \textsl{ph topology}.
  \end{enumerate}
  These topologies are related in the following manner, where the arrows below
  point to finer topologies
  \citeleft\citen{GK15}\citemid (6)\citepunct \citen{EM19}\citemid p.\
  5288\citeright:
  \[
    \begin{tikzcd}
      & & & & \mathrm{f} \rar \dar & \mathrm{prop} \arrow{dd}\\
      & \mathrm{Zar} \rar \dar & \mathrm{Nis} \rar \dar & \mathrm{\acute{E}t}
      \dar \rar & \mathrm{qfh} 
      \arrow[end anchor={[xshift=1pt,yshift=-1.5pt]north west}]{dr}\\
      \mathrm{cdp} \rar & \mathrm{rh} \rar & \mathrm{cdh} \rar & \mathrm{eh}
      \arrow{r} & \mathrm{sdh} \rar & \mathrm{h}\mathrlap{.}
    \end{tikzcd}
  \]
\end{definition}
\subsection{Du Bois pairs}
We define Du Bois pairs following
\citeleft\citen{Kov11}\citemid \S\S3.C--3.D\citepunct
\citen{KS16}\citemid \S2C\citeright,
adapted to working with Grothendieck
topologies instead of hyperresolutions.
\begin{definition}\label{def:genpair}
  A \textsl{(generalized) pair \((X,\Sigma)\)} consists of a
  separated Noetherian scheme together with a closed
  subscheme \(\Sigma \subseteq X\).
  A \textsl{morphism of pairs}
  \[
    f\colon \bigl(\redpair{X}\bigr) \longrightarrow \bigl(\redpair{Y}\bigr)
  \]
  is a morphism of schemes \(f\colon X \to Y\) such that
  \(f(\Sigma_X) \subseteq \Sigma_Y\).
  \par Let \(f\colon (\redpair{X}) \to
  (\redpair{Y})\) be a morphism of pairs.
  Letting \(\tau \in \{\mathrm{rh},\mathrm{cdh},\mathrm{eh},\mathrm{sdh},\mathrm{h}\}\),
  we have the commutative diagram
  \[
    \begin{tikzcd}[row sep={between origins,3em},column sep={between origins,4em}]
      & \Sigma_{Y,\tau} \arrow{rr} \arrow{dd}[near start]{\rho_{\Sigma_Y,\tau}}
      & & Y_\tau \arrow{dd}{\rho_{Y,\tau}}\\
      \Sigma_{X,\tau} \arrow{dd}[swap]{\rho_{\Sigma_X,\tau}}
      \arrow[crossing over]{rr} \arrow{ur}
      & & X_\tau \arrow{ur}[swap]{f_\tau}\\
      & \Sigma_Y \arrow[hook]{rr} & & Y\\
      \Sigma_X \arrow[hook]{rr} \arrow{ur} &
      & X \arrow[leftarrow,crossing over]{uu}[swap,near end]{\rho_{X,\tau}}
      \arrow{ur}[swap]{f}
    \end{tikzcd}
  \]
  of morphisms of sites, where the schemes in the bottom half of the
  diagram are thought of as their associated Zariski sites.
  This commutative diagram induces the commutative diagram
  \begin{equation}\label{eq:omegafunctorial}
    \begin{tikzcd}[row sep={between origins,3em},column sep={between origins,5.5em}]
      & \mathcal{R}f_*\mathcal{I}_{\Sigma_Y \subseteq Y} \arrow{rr} \arrow{dd}
      & & \mathcal{R}f_*\cO_Y \arrow{rr} \arrow{dd}
      & & \mathcal{R}f_*\cO_{\Sigma_Y} \arrow{dd} \arrow{rr}{+1} &[-3em] & {}\\
      \mathcal{I}_{\Sigma_X \subseteq X} \arrow{ur} \arrow{dd}
      \arrow[crossing over]{rr}
      & & \cO_X \arrow[crossing over]{rr} \arrow{ur}
      & & \cO_{\Sigma_X} \arrow{ur} \arrow[crossing over]{rr}{+1} & & {}\\
      & \mathcal{R}f_*\underline{\Omega}^0_{Y,\Sigma_Y,\tau} \arrow{rr}
      & & \mathcal{R}f_*\mathcal{R}\rho_{Y,\tau*}\mathcal{O}_{Y_\tau} \arrow{rr}
      & & \mathcal{R}f_*\mathcal{R}\rho_{\Sigma_Y,\tau*}\mathcal{O}_{\Sigma_{Y,\tau}}
      \arrow{rr}{+1} & & {}\\
      \underline{\Omega}^0_{X,\Sigma_X,\tau} \arrow{rr} \arrow{ur}
      & & \mathcal{R}\rho_{X,\tau*}\mathcal{O}_{X_\tau} \arrow{rr} \arrow{ur}
      \arrow[leftarrow,crossing over]{uu}
      & & \mathcal{R}\rho_{\Sigma_X,\tau*}\mathcal{O}_{\Sigma_{X,\tau}} \arrow{ur}
      \arrow[leftarrow,crossing over]{uu} \arrow{rr}{+1} & & {}
    \end{tikzcd}
  \end{equation}
  where each row is an exact triangle.
  Here, we set
  \begin{align*}
    \underline{\Omega}^0_{X,\Sigma_X,\tau} \coloneqq
    \mathcal{R}\rho_{X,\tau*}\mathcal{I}_{\Sigma_X,\tau}
    \cong{}&
    {\operatorname{Cone}\bigl( \mathcal{R}\rho_{X,\tau*}\mathcal{O}_{X_\tau}
      \longrightarrow
    \mathcal{R}\rho_{\Sigma,\tau*}\mathcal{O}_{\Sigma_{X,\tau}} \bigr)[-1]}
  \intertext{and similarly for \(Y\).
  We also set}
    \underline{\Omega}^0_{X,\tau} \coloneqq{}&
    \underline{\Omega}^0_{X,\emptyset,\tau} \coloneqq
    \mathcal{R}\rho_{X,\tau*}\mathcal{O}_{X_\tau}.
  \end{align*}
\end{definition}
\begin{remark}\label{rem:naturalisosehcdhrh}
  Let \(X\) be a separated Noetherian scheme.
  If \(X\) is quasi-excellent and of equal characteristic zero,
  then the natural morphisms
  \[
    \mathcal{R}\rho_{X,\mathrm{eh}*}\cO_{X_\mathrm{eh}} \longrightarrow
    \mathcal{R}\rho_{X,\mathrm{sdh}*}\cO_{X_\mathrm{sdh}} \longrightarrow
    \mathcal{R}\rho_{X,\mathrm{h}*}\cO_{X_\mathrm{h}}
  \]
  are quasi-isomorphisms.
  The composition is a quasi-isomorphism by the proof of \cite[Proposition
  6.1]{HJ14} (\cite{HJ14} uses resolutions of singularities, which exist by
  \cite[Theorem 1.1]{Tem08}).
  The second morphism is a quasi-isomorphism since the sdh and h topologies
  coincide in equal characteristic zero \cite[Remark 6.4]{HKK17}.
\end{remark}
\par We now define Du Bois singularities with respect to Grothendieck topologies.
When \(\Sigma = \emptyset\), this definition has appeared before.
The definition using the h topology is used to characterize Du Bois
singularities for complex varieties in \cite[Theorem 4.16]{Lee09}.
In arbitrary characteristic,
the definition using the cdh topology is due to Huber and
Kelly \cite[Definition 7.19]{Hub16},
and the definition using the eh topology appears in \cite[Definition 2.13]{KW}.
\begin{definition}\label{def:dbviatop}
  Let \((X,\Sigma)\) be a pair and let \(\tau \in
  \{\mathrm{rh},\mathrm{cdh},\mathrm{eh},\mathrm{sdh},\mathrm{h}\}\).
  We say that \textsl{the pair \((X,\Sigma)\) has Du Bois singularities with
  respect to the \(\tau\) topology} if the morphism
  \[
    \mathcal{I}_{\Sigma \subseteq X} \longrightarrow \underline{\Omega}^0_{X,\Sigma,\tau}
  \]
  defined in \eqref{eq:omegafunctorial} is a quasi-isomorphism.
  If \((X,\emptyset)\) is a Du Bois pair with respect to the \(\tau\) topology,
  we say that \textsl{\(X\) has Du Bois singularities  with respect to the
  \(\tau\) topology}.
  The \textsl{Du Bois defect of the pair \((X,\Sigma)\) with respect to the
  \(\tau\) topology} is the mapping cone
  \[
    \underline{\Omega}^\times_{X,\Sigma,\tau}
    \coloneqq \operatorname{Cone}\bigl(\mathcal{I}_{\Sigma \subseteq X}
    \longrightarrow \underline{\Omega}^0_{X,\Sigma,\tau}\bigr).
  \]
\end{definition}
For quasi-excellent separated Noetherian schemes of equal characteristic zero, this
definition matches the usual definition of Du Bois singularities and pairs
using the 0-th graded piece of the Deligne--Du Bois complex.
The statement for the h topology is essentially due to Lee
\cite[Theorem 4.16]{Lee09} and 
Huber and J\"order \cite[Corollary 6.16 and Theorem 7.12]{HJ14}.
The statement for the cdh topology is essentially due to Corti\~{n}as,
Haesemeyer, Walker, and Weibel \cite[Lemma 2.1]{CHWW11}.
\begin{proposition}\label{prop:sameashyperresdef}
  Let \(X\) be a quasi-excellent separated Noetherian scheme of equal
  characteristic zero.
  Let \(\pi_\bullet\colon X_\bullet \to X_\mathrm{red}\) be a simplicial or
  (possibly iterated) cubical hyperresolution.
  Let \(\tau \in \{\mathrm{cdh},\mathrm{eh},\mathrm{sdh},\mathrm{h}\}\).
  Then, we have
  \[
    \mathcal{R}\pi_{\bullet*}\cO_{X_\bullet} \cong
    \mathcal{R}\rho_{X,\tau*}\cO_{X_\tau}.
  \]
  As a consequence, we have the following.
  \begin{enumerate}[label=\((\roman*)\),ref=\roman*]
    \item\label{prop:sameashyperresdefeft}
      Suppose that \((X,\Sigma)\) is a pair where \(X\) is essentially of
      finite type over a field of equal characteristic zero.
      Then, \((X,\Sigma)\) has Du Bois singularities in the sense of
      \emph{\citeleft\citen{Kov11}\citemid Definition 3.13\citepunct
      \citen{KS16}\citemid Definition 2.8\citeright}
      if and only if \((X,\Sigma)\) has Du Bois
      singularities with respect to the \(\tau\) topology.
    \item\label{prop:sameashyperresdefqe}
      Suppose that \(\Sigma = \emptyset\).
      Then, \(X\) has Du Bois singularities in the sense of
      \emph{\citeleft\citen{Ste83}\citemid
      (3.5)\citepunct \citen{Murinj}\citemid Definition 7.3.1\citeright}
      if and only if \(X\) has Du Bois singularities
      with respect to the \(\tau\) topology.
  \end{enumerate}
\end{proposition}
\begin{proof}
  By \cite[Corollary 7.2.8]{Murinj}, the left-hand side is the same regardless
  of whether \(X_\bullet\) is a simplicial or a (possibly iterated) cubical
  hyperresolution.
  By Remark \ref{rem:naturalisosehcdhrh}, it suffices to show the cases when
  \(\tau \in \{\mathrm{cdh},\mathrm{h}\}\).
  \par We claim we may
  replace \(X\) by the spectrum of one of its local rings to assume that
  \(X\) is the spectrum of a quasi-excellent local \(\mathbb{Q}\)-algebra, which
  in particular is of finite Krull dimension.
  Since the left-hand side is local, it suffices to show that the cdh and h
  topologies are compatible with localization.
  This follows from
  Nayak's version of Nagata compactification for separated essentially of finite
  type morphisms \cite[Theorem 4.1]{Nay09}, which implies that
  standard cdh covers from \cite[Proposition 5.9]{SV00} and the standard h
  covers from \citeleft\citen{GL01}\citemid Corollary 3.9\citepunct
  \citen{stacks-project}\citemid
  \href{https://stacks.math.columbia.edu/tag/0DBD}{Tag 0DBD}\citeright\ 
  defined over a localization of \(X\) can be extended to standard ph or
  h covers of \(X\).
  \par For the case when \(\tau = \mathrm{h}\), the proof of \cite[Corollary
  6.16]{HJ14} applies.
  While the proof is stated for varieties over a field of characteristic zero,
  the results from \cite{Gei06} used in \cite{HJ14} hold in our situation:
  The required strong form of resolutions of singularities \cite[Definition
  2.4]{Gei06} holds by 
  \cite[Chapter I, \S3, Main Theorem I\((n)\)]{Hir64}, and the cited
  result \cite[Chapitre IV, Th\'eor\`eme 1.2.1]{Gro85} (see also
  \cite[Proposition 3.3]{GNA02}) holds for the structure sheaves \(\cO\).
  \par For the case when 
  \(\tau = \mathrm{cdh}\), the proof of \cite[Lemma 2.1]{CHWW11} applies
  because simplicial hyperresolutions exist in our situation
  \cite[Corollary 4.5.5]{Murinj}.
  The result \cite[Corollary 2.5]{CHW08} cited in \cite{CHWW11} can be replaced
  by the earlier result \cite[Proposition 6.3]{CHSW08} to avoid discussing
  differential forms on \(X\).
  \par Finally, statements \((\ref{prop:sameashyperresdefeft})\) and
  \((\ref{prop:sameashyperresdefqe})\) follow by comparing the definitions
  in Definition \ref{def:dbviatop} and in \citeleft\citen{Ste83}\citemid
  (3.5)\citepunct \citen{Kov11}\citemid Definition 3.13\citepunct
  \citen{KS16}\citemid Definition 2.8\citepunct
  \citen{Murinj}\citemid Definition 7.3.1\citeright.
\end{proof}

\subsection{\texorpdfstring{\(\tau\)}{\unichar{"03C4}}-injective pairs}
In the next section, we will characterize Du Bois pairs in equal characteristic
zero in terms of injectivity of maps on local cohomology.
This definition is inspired by the
proofs of \cite[Theorem 5.4]{Kov11} and \cite[Theorem 2.5]{Kov12},
and this injectivity condition is used to characterize Du Bois singularities
in \citeleft\citen{Kov99}\citemid Lemma 2.2\citepunct
\citen{BST17}\citemid Theorem 4.8\citeright\ when \(X\) is essentially of finite
type over a field and \(\Sigma = \emptyset\).
\par When \(X\) is of prime characteristic \(p >
0\) and \(\Sigma = \emptyset\), this injectivity condition for \(\tau =
\mathrm{h}\) characterizes
\(F\)-injectivity by \cite[Theorem 4.8]{BST17} and its proof (see
\citeleft\citen{BST17}\citemid Theorem 3.3\citepunct
\citen{BS17}\citemid Theorem 4.1\((i)\)\citepunct
\citen{stacks-project}\citemid
\href{https://stacks.math.columbia.edu/tag/0EVW}{Tag
0EVW}\citeright).
A similar definition for \(\tau = \mathrm{eh}\) and \(\Sigma = \emptyset\) is
called \textsl{pseudo-Du Bois} in \cite[Definition 3.4]{KW}.
\begin{definition}
  Let \((X,\Sigma)\) be a pair and let \(\tau \in
  \{\mathrm{rh},\mathrm{cdh},\mathrm{eh},\mathrm{sdh},\mathrm{h}\}\).
  We say that \textsl{the pair \((X,\Sigma)\) is \(\tau\)-injective} at a point
  \(x \in X\) if the natural morphism
  \[
    H^i_x\bigl(\Spec(\cO_{X,x}),\mathcal{I}_{\Sigma,x}\bigr)
    \longrightarrow \mathbb{H}^i_x\bigl(\Spec(\cO_{X,x}),
    \underline{\Omega}^0_{X,\Sigma,\tau,x}\bigr)
  \]
  induced by \eqref{eq:omegafunctorial} is injective.
  We say that \textsl{the pair \((X,\Sigma)\) is \(\tau\)-injective} if it is
  \(\tau\)-injective at every point \(x \in X\).
\end{definition}
\begin{remark}
  Let \(\tau,\tau' \in
  \{\mathrm{rh},\mathrm{cdh},\mathrm{eh},\mathrm{sdh},\mathrm{h}\}\) be a pair of
  topologies such that \(\tau'\) is finer than \(\tau\), or in other words,
  there is an arrow \(\tau \to \tau'\) in the diagram from Definition
  \ref{def:grotops}.
  Then, we have the factorization
  \[
    \begin{tikzcd}[row sep=0]
      & \mathbb{H}^i_x\bigl(\Spec(\cO_{X,x}),
      \underline{\Omega}^0_{X,\Sigma,\tau,x}\bigr) \arrow{dd}\\
      H^i_x\bigl(\Spec(\cO_{X,x}),\mathcal{I}_{\Sigma,x}\bigr)
      \arrow[start anchor={[yshift=-4pt]north east},end anchor={[yshift=-2pt]west}]{ur}
      \arrow[start anchor={[yshift=4pt]south east},end anchor={[yshift=2pt]west}]{dr}\\
      & \mathbb{H}^i_x\bigl(\Spec(\cO_{X,x}),
      \underline{\Omega}^0_{X,\Sigma,\tau',x}\bigr)\mathrlap{.}
    \end{tikzcd}
  \]
  Thus, \(\tau'\)-injectivity implies \(\tau\)-injectivity.
  If \(X\) is quasi-excellent of equal characterisic zero, then
  cdh-, eh-, sdh-, and h-injectivity all coincide by Remark
  \ref{rem:naturalisosehcdhrh} and Proposition \ref{prop:sameashyperresdef}.
\end{remark}

\section{The key injectivity theorem}
\label{sect:dbinchar0}
In this section, we prove our version (Theorem \ref{thm:keyinjintro})
of the key injectivity theorem of Kov\'acs
and Schwede
\citeleft\citen{KS16deforms}\citemid Theorem 3.3\citepunct \citen{KS16}\citemid 
Theorem 3.2\citepunct \citen{MSS17}\citemid Lemma 3.2\citepunct
\citen{KK20}\citemid Theorem 6.3 and Corollary 6.5\citeright.
As in \cite{MurKV,Murinj}, the idea is that we want to use Noetherian
approximation to reduce Theorem \ref{thm:keyinjintro} to the case when \(X\) is
of finite type over a field.\smallskip
\par A new idea in this paper is that we will use Zariski--Riemann spaces
associated to pairs \cite{FK06,Tem10,FK18} (see Definition \ref{def:zrforpairs})
to construct an analogue of the Nagata compactification of a separated finite
type morphism of schemes.
For a quasi-compact quasi-separated scheme \(X\) over a Noetherian ring
\(\mathbf{k}\), our construction results in a locally ringed space \(\langle X
\rangle_{\mathrm{cpt}}\) that is a inverse limit of proper
\(\mathbf{k}\)-schemes such that \(\langle X
\rangle_{\mathrm{cpt}} \setminus X\) is ind-constructible (in fact, a union of
closed subsets).
We can then prove a version (Theorem \ref{thm:limitofhodge})
of the Hodge-theoretic input for \(\langle X
\rangle_{\mathrm{cpt}}\) that is used in the proofs in
\cite{KS16deforms,KS16,MSS17} using Grothendieck's limit theorem for inverse
limits of toposes \cite[Expos\'e VI]{SGA42}.
For varieties, this Hodge-theoretic statement is a consequence of the \(E_1\)
degeneration of the Hodge-to-de Rham spectral sequence \cite{Del74,DB81} (see
Theorem \ref{thm:appofhodgetodr}).
We then use the long exact sequence on local cohomology to deduce
Theorem \ref{thm:keyinjintro} from Theorem \ref{thm:limitofhodge}.\smallskip
\par Finally, we use our key injectivity theorem to prove that in equal
characteristic zero, having Du Bois singularities with respect to the h topology
is the same thing as being h-injective (Theorem \ref{thm:splittingcriterion}).
For quasi-excellent schemes of equal characteristic zero, the same statement
holds for the cdh, eh, and sdh topologies.

\subsection{Zariski--Riemann spaces}
We define Zariski--Riemann spaces associated to pairs.
These are called \textsl{relative Riemann--Zariski spaces} in \cite{Tem10}.
\begin{citeddef}[{\citeleft\citen{FK06}\citemid Definition 5.3 and
  Definition 5.9\citepunct
  \citen{Tem10}\citemid \S3.3\citepunct
  \citen{FK18}\citemid Chapter II, Definition E.2.2\citeright}]
  \label{def:zrforpairs}
  Let \(X\) be a quasi-compact quasi-separated scheme.
  Let \(\mathcal{I} \subseteq \mathcal{O}_X\) be a quasi-coherent ideal sheaf of
  finite type such that \(U = X - V(\mathcal{I})\) is a dense open subset of
  \(S\).
  Denote by \(\operatorname{AId}_{(X,U)}\) the set of quasi-coherent ideal
  sheaves \(\mathcal{J} \subseteq \mathcal{O}_X\) of finite type such that
  \(\lvert V(\mathcal{J}) \rvert \subseteq \lvert V(\mathcal{I}) \rvert\).
  The \textsl{Zariski--Riemann space associated to the pair \((X,U)\)} is the
  inverse limit
  \[
    \langle X \rangle_U \coloneqq \varprojlim_{\mathcal{J} \in
    \operatorname{AId}_{(X,U)}} \operatorname{Bl}_{\mathcal{J}}X
  \]
  computed in the category of locally ringed spaces.
  Because the blowup morphisms are isomorphisms along \(U\), the
  projection morphism \(\langle X \rangle_U \to X\) induces an isomorphism
  between an open subspace of \(\langle X \rangle_U\) and \(U\).
\end{citeddef}
We can use Zariski--Riemann spaces to define canonical compactifications.
\begin{citeddef}[{\cite[Definition F.2.1]{FK18}}]
  Let \(f\colon X \to Y\) be a separated finite type morphism of quasi-compact
  quasi-separated schemes.
  By Nagata's compactification theorem \cite{Nag63,Con07},
  there exists a commutative diagram
  \[
    \begin{tikzcd}[column sep=small]
      X \arrow[hook]{rr}{j}\arrow{dr}[swap,pos=0.3]{f\vphantom{\bar{f}}}
      & & \bar{X} \arrow{dl}[pos=0.3]{\bar{f}}\\
      & Y
    \end{tikzcd}
  \]
  where \(j\) is a dense open immersion and \(\bar{f}\) is proper.
  The \textsl{canonical compactification of \(X\) over \(Y\)} is
  \[
    \langle X \rangle_{\mathrm{cpt}} \coloneqq \langle \bar{X} \rangle_X.
  \]
\end{citeddef}
\begin{remark}
  We have defined \(\langle X \rangle_{\mathrm{cpt}}\) using Nagata's
  compactification theorem as input.
  In \cite[Chapter II,
  Definition F.2.9]{FK18}, Fujiwara and Kato define the canonical
  compactification \(\langle X \rangle_{\mathrm{cpt}}\) without assuming
  Nagata's compactification theorem and use the notion to give
  an alternative proof of Nagata's compactification theorem \cite[Chapter II,
  Theorem F.1.1]{FK18}.
\end{remark}
\subsection{The Hodge-theoretic input}
Our goal in this section is to prove a version of the surjectivity in
\eqref{eq:hodgesurj} below for separated
schemes that are not proper or even of finite
type over a field.
\begin{citedthm}[{\citeleft\citen{Del74}\citemid \S8.2\citepunct
  \citen{DB81}\citemid Th\'eor\`eme
  4.5\((iii)\) and Proof of Th\'eor\`eme 4.6\citepunct
  \citen{Kov11}\citemid Theorem 4.1 and Corollary 4.2\citeright}]
  \label{thm:appofhodgetodr}
  Let \((X,\Sigma)\) be a pair such that \(X\) is proper over \(\mathbb{C}\).
  Consider the Deligne--Du Bois complex
  \((\underline{\Omega}_{X,\Sigma}^\bullet,F)\)
  and set \(U \coloneqq X - \Sigma\).
  Then, the spectral sequence
  \[
    E_1^{p,q} =
    \mathbb{H}^q\bigl(X,\operatorname{Gr}^p_F
    \underline{\Omega}_{X,\Sigma}^\bullet
    \bigr)
    = \mathbb{H}^q\bigl(X,\underline{\Omega}_{X,\Sigma}^p\bigr) \Rightarrow
    H^{p+q}_c(U^{\mathrm{an}},\mathbb{C})
  \]
  of the filtration \(F\) degenerates on the \(E_1\) page and abuts to the Hodge
  filtration on the mixed Hodge structure
  \(H^{p+q}_c(U^{\mathrm{an}},\mathbb{C})\).
  As a consequence, the natural morphisms
  \begin{equation}\label{eq:hodgesurj}
    H^i(X,\mathcal{I}_\Sigma) \longrightarrow
    \mathbb{H}^i\bigl(X,\underline{\Omega}_{X,\Sigma}^0\bigr)
  \end{equation}
  are surjective for every \(i\).
\end{citedthm}
To state our version of the surjectivity \eqref{eq:hodgesurj}, we define our
version of the canonical compactification \(\langle X \rangle_{\mathrm{cpt}}\)
for quasi-compact separated schemes over Noetherian rings.
\begin{setup}\label{setup:invlimcpt}
  Let \(X\) be a quasi-compact separated scheme over a Noetherian ring
  \(\mathbf{k}\).
  Consider a closed subscheme \(Z \subseteq X\) such that \(\mathcal{I}_Z\) is
  of finite type.
  By absolute Noetherian approximation \cite[Theorem C.9]{TT90}, we can write
  \[
    X \cong \varprojlim_{\lambda \in \Lambda} X_\lambda
  \]
  as an inverse limit of separated schemes of finite type over \(\mathbf{k}\)
  with affine, scheme-theoretically dominant transition morphisms
  \(u_{\lambda\mu}\colon X_\mu \to X_\lambda\) and with
  projection morphisms \(u_\lambda\colon X \to X_\lambda\).
  \par By \cite[Proposition 8.6.3]{EGAIV3}, there exists an index \(\lambda_0
  \in \Lambda\) such that \(\Sigma =
  u_{\lambda_0}^{-1}(\Sigma_{\lambda_0})\) for a closed subscheme
  \(\Sigma_{\lambda_0} \subseteq X_{\lambda_0}\).
  Choose a Nagata compactification \(X_{\lambda_0} \hookrightarrow
  \bar{X}_{\lambda_0}\) over \(\mathbf{k}\)
  and let \(\bar{\Sigma}_{\lambda_0}\) be the closure
  of \(\Sigma_{\lambda_0}\) in \(\bar{X}_{\lambda_0}\).
  Denote by
  \[
    \pi_{\lambda_0}\colon \langle X_{\lambda_0} \rangle_{\operatorname{cpt}}
    \xrightarrow{v_{\lambda_0,\mathcal{J}}}
    \operatorname{Bl}_{\mathcal{J}} \bar{X}_{\lambda_0}
    \xrightarrow{\pi_{\lambda_0,\mathcal{J}}}
    \bar{X}_{\lambda_0}
  \]
  the projection morphisms for \(\mathcal{J} \in
  \operatorname{AId}_{(\bar{X}_{\lambda_0},X_{\lambda_0})}\),
  and let \(\pi_{\lambda_0}\) be their composition.
  We then set
  \[
    \mathcal{I}_{\langle \Sigma_{\lambda_0} \rangle} \coloneqq
    \pi_{\lambda_0}^{-1}
    \mathcal{I}_{\bar{\Sigma}_{\lambda_0}} \cdot
    \cO_{\langle X_{\lambda_0} \rangle_{\mathrm{cpt}}}
    = v_{\lambda_0,\mathcal{J}}^{-1}\pi_{\lambda_0,\mathcal{J}}^{-1}
    \mathcal{I}_{\bar{\Sigma}_{\lambda_0}} \cdot
    \cO_{\langle X_{\lambda_0} \rangle_{\mathrm{cpt}}}.
  \]
  For each \(\lambda \ge \lambda_0\), choose a Nagata compactification
  \(X_\lambda \hookrightarrow \bar{X}_\lambda\) over \(\mathbf{k}\)
  that fits into the commutative diagram
  \[
    \begin{tikzcd}[column sep=3em]
      X_\lambda \rar[hook]\dar[swap]{u_{\lambda_0\lambda}}
      & \bar{X}_\lambda \dar[swap]{\bar{u}_{\lambda_0\lambda}}
      & \lar[swap]{\pi_\lambda} \langle X_\lambda \rangle_{\mathrm{cpt}}
      \dar{\langle u_{\lambda_0\lambda} \rangle}\\
      X_{\lambda_0} \rar[hook] & \bar{X}_{\lambda_0}
      & \lar[swap]{\pi_{\lambda_0}} \langle X_{\lambda_0} \rangle_{\mathrm{cpt}}
    \end{tikzcd}
  \]
  where we denote
  \[
    \pi_{\lambda}\colon \langle X_{\lambda} \rangle_{\operatorname{cpt}}
    \xrightarrow{v_{\lambda,\mathcal{J}}}
    \operatorname{Bl}_{\mathcal{J}} \bar{X}_{\lambda}
    \xrightarrow{\pi_{\lambda,\mathcal{J}}}
    \bar{X}_{\lambda}
  \]
  as before for \(\mathcal{J} \in
  \operatorname{AId}_{(\bar{X}_{\lambda},X_{\lambda})}\).
  By the universal property of blowups
  \cite[\href{https://stacks.math.columbia.edu/tag/0806}{Tag
  0806}]{stacks-project}, the morphisms \(\langle u_{\lambda_0\lambda} \rangle\)
  of locally ringed spaces exist and fit into the commutative diagram above.
  Using the universal property of blowups
  again, we obtain the inverse system
  \[
    \Set[\Big]{\langle u_{\lambda\mu} \rangle\colon
    \langle X_\mu \rangle_{\mathrm{cpt}}
    \longrightarrow \langle X_\lambda \rangle_{\mathrm{cpt}}}_{\mu \ge
    \lambda \ge \lambda_0}
  \]
  of locally ringed spaces.
  We then set
  \[
    \langle X \rangle_{\mathrm{cpt}} \coloneqq \varprojlim_{\lambda \in \Lambda}
    \langle X_\lambda \rangle_{\mathrm{cpt}}
  \]
  where the inverse limit is computed in the category of locally ringed spaces
  with projection morphisms
  \[
    \langle u_\lambda \rangle
    \colon \langle X \rangle_{\mathrm{cpt}} \longrightarrow
    \langle X_\lambda \rangle_{\mathrm{cpt}}.
  \]
  Choosing a Nagata compactification \(X_\lambda \hookrightarrow
  \bar{X}_\lambda\) for each \(\lambda\), we also set
  \begin{equation}\label{eq:honcptaslim}
    \langle X \rangle_{\mathrm{cpt},\mathrm{h}} \coloneqq \varprojlim_{\lambda
    \in \Lambda} \varprojlim_{\mathcal{J} \in
    \operatorname{AId}_{(\bar{X}_\lambda,X_\lambda)}}
    \bigl(\operatorname{Bl}_{\mathcal{J}}\bar{X}_{\lambda}\bigr)_{\mathrm{h}}
  \end{equation}
  where the inverse limit is computed as sites as in \cite[Expos\'e VI,
  Th\'eor\`eme 8.2.3]{SGA42}.
  This definition does not depend on the choice of compactification
  \(\bar{X}_\lambda\) because for any other choice of
  compactification \(\bar{X}'_\lambda\), the inverse systems of blowups
  \(\operatorname{Bl}_{\mathcal{J}}\bar{X}_{\lambda}\) and
  \(\operatorname{Bl}_{\mathcal{J}}\bar{X}_{\lambda}'\) appearing in the inverse
  limit above are coinitial by \cite[Premi\`ere partie, Corollaire 5.7.12]{RG71}
  (see also \cite[Theorem 2.11]{Con07}).
  \par Finally, for each \(\lambda \ge \lambda_0\), we set
  \begin{gather*}
    \begin{aligned}
      \mathcal{I}_{\bar{\Sigma}_\lambda} &\coloneqq
      \bar{u}_{\lambda_0\lambda}^{-1}\mathcal{I}_{\bar{\Sigma}_{\lambda_0}} \cdot
      \cO_{\bar{X}_\lambda}\\
      \mathcal{I}_{\langle \Sigma_\lambda \rangle} &\coloneqq
      \pi_{\lambda}^{-1}\mathcal{I}_{\bar{\Sigma}_{\lambda}} \cdot
      \cO_{\langle X_\lambda \rangle_{\mathrm{cpt}}}
    \end{aligned}\\
    \mathcal{I}_{\langle \Sigma \rangle} \coloneqq
    \langle u_{\lambda}\rangle^{-1}\mathcal{I}_{\langle \Sigma_{\lambda}
    \rangle} \cdot \cO_{\langle X \rangle_{\mathrm{cpt}}}.
  \end{gather*}
  We also let \(\mathcal{I}_{\bar{\Sigma}_\lambda,\mathrm{h}}\),
  \(\mathcal{I}_{\langle \Sigma_\lambda \rangle,\mathrm{h}}\), and
  \(\mathcal{I}_{\langle \Sigma \rangle,\mathrm{h}}\) be their
  h-sheafifications.
  Note that \(\mathcal{I}_{\bar{\Sigma}_\lambda,\mathrm{h}}\),
  \(\mathcal{I}_{\langle \Sigma_\lambda \rangle,\mathrm{h}}\), and
  \(\mathcal{I}_{\langle \Sigma \rangle,\mathrm{h}}\) are ideal sheaves in
  \(\cO_{\bar{X}_{\lambda,\mathrm{h}}}\),
  \(\cO_{\langle X_\lambda \rangle_{\mathrm{cpt},\mathrm{h}}}\), and
  \(\cO_{\langle X \rangle_{\mathrm{cpt},\mathrm{h}}}\) since
  sheafification is exact \citeleft\citen{SGA41}\citemid
  Expos\'e II, Th\'eor\`eme 4.1(1)\citepunct \citen{stacks-project}\citemid 
  \href{https://stacks.math.columbia.edu/tag/00WJ}{Tag 00WJ}\citeright.
\end{setup}
We can now state and prove our version of the surjectivity \eqref{eq:hodgesurj}
in Theorem \ref{thm:appofhodgetodr}.
\begin{theorem}\label{thm:limitofhodge}
  Let \(X\) be a quasi-compact separated scheme of equal characteristic zero,
  and let
  \(\Sigma \subseteq X\) be a closed subscheme such that \(\mathcal{I}_\Sigma\)
  is of finite type.
  Fix notation as in Setup \ref{setup:invlimcpt} where \(\mathbf{k} =
  \mathbb{Q}\).
  Then, the morphism
  \[
    H^i\bigl(\langle X \rangle_{\mathrm{cpt}},\mathcal{I}_{\langle \Sigma
    \rangle}\bigr)
    \longrightarrow
    H^i\bigl(\langle X \rangle_{\mathrm{cpt},\mathrm{h}},
    \mathcal{I}_{\langle \Sigma \rangle,\mathrm{h}}\bigr)
  \]
  is surjective for every \(i\).
\end{theorem}
\begin{proof}
  We have the commutative diagram
  \begin{equation}\label{eq:htopcohmaplambdas}
    \begin{tikzcd}[row sep=2.25em,baseline=(midrow.base)]
      H^i\Bigl(\operatorname{Bl}_{\mathcal{J}}\bar{X}_\lambda,
      \pi_{\lambda,\mathcal{J}}^{-1}\mathcal{I}_{\bar{\Sigma}_\lambda}\cdot
      \cO_{\operatorname{Bl}_{\mathcal{J}}\bar{X}_\lambda}\Bigr)
      \rar \arrow[d,""{name=midrow}] &
      H^i\biggl( \bigl(\operatorname{Bl}_{\mathcal{J}}\bar{X}_\lambda
      \bigr)_{\mathrm{h}},\Bigl(
      \pi_{\lambda,\mathcal{J}}^{-1}\mathcal{I}_{\bar{\Sigma}_\lambda}\cdot
      \cO_{\operatorname{Bl}_{\mathcal{J}}\bar{X}_\lambda}\Bigr)_{\mathrm{h}}
      \biggr) \dar\\
      H^i\bigl(\langle X
      \rangle_{\mathrm{cpt}},\mathcal{I}_{\langle\Sigma\rangle}\bigr)
      \rar &
      H^i\bigl(\langle X \rangle_{\mathrm{cpt},\mathrm{h}},
      \mathcal{I}_{\langle\Sigma\rangle,\mathrm{h}}\bigr)\mathrlap{.}
    \end{tikzcd}
  \end{equation}
  \par We claim that the top horizontal map in \eqref{eq:htopcohmaplambdas} is
  surjective.
  We first note that
  \[
    H^i\biggl( \bigl(\operatorname{Bl}_{\mathcal{J}}\bar{X}_\lambda
    \bigr)_{\mathrm{h}},\Bigl(
    \pi_{\lambda,\mathcal{J}}^{-1}\mathcal{I}_{\bar{\Sigma}_\lambda}\cdot
    \cO_{\operatorname{Bl}_{\mathcal{J}}\bar{X}_\lambda}\Bigr)_{\mathrm{h}}
    \biggr) \cong \mathbb{H}^i\Bigl(\operatorname{Bl}_{\mathcal{J}}
    \bar{X}_\lambda,\underline{\Omega}^0_{\operatorname{Bl}_{\mathcal{J}}
    \bar{X}_\lambda,\bar{\Sigma}_\lambda}\Bigr)
  \]
  by the definition of \(\underline{\Omega}^0_{X,\Sigma,\mathrm{h}}\),
  Proposition \ref{prop:sameashyperresdef},
  and the definition of \(\underline{\Omega}^0_{X,\Sigma}\).
  Since the formation of \(\underline{\Omega}^0_{X,\Sigma}\) is compatible with
  regular base change \cite[Corollary 7.2.7]{DB81} and by flat base change,
  it suffices to show that the
  top horizontal map is surjective after base change to \(\mathbb{C}\).
  After this reduction,
  we see that the top horizontal map in \eqref{eq:htopcohmaplambdas} is
  surjective by Theorem \ref{thm:appofhodgetodr}.
  \par To finish the proof, our goal is to apply Grothendieck's
  limit theorem \cite[Expos\'e VI, Corollaire 8.7.7 and Remarque 8.7.8]{SGA42}
  to the maps in the top row of \eqref{eq:htopcohmaplambdas} to obtain the
  surjectivity of the bottom horizontal map in \eqref{eq:htopcohmaplambdas}.
  We note that the h-topology over a separated Noetherian scheme
  is coherent in the sense of \cite[Expos\'e VI, D\'efinition 2.3]{SGA42} by
  \citeleft\citen{GL01}\citemid \S3\citepunct
  \citen{GK15}\citemid Remark 2.4\citeright\ and that the morphisms of the
  associated toposes in \eqref{eq:honcptaslim}
  are coherent in the sense of \cite[Expos\'e VI,
  D\'efinition 3.1]{SGA42} by \cite[Expos\'e VI, Corollaire 3.3]{SGA42}.
  Thus, taking cohomology is compatible with the inverse limit in
  \eqref{eq:honcptaslim} by Grothendieck's limit theorem for inverse limits of
  toposes \cite[Expos\'e VI, Corollaire 8.7.7 and Remarque 8.7.8]{SGA42}.
  We now note that
  \begin{align*}
    \mathcal{I}_{\langle\Sigma\rangle}
    &\cong \varinjlim_{\lambda \ge \lambda_0}
    \varinjlim_{\mathcal{J} \in
    \operatorname{AId}_{(\bar{X}_\lambda,X_\lambda)}}
    \langle v_{\lambda,\mathcal{J}} \rangle^{-1}
    \langle \pi_{\lambda,\mathcal{J}} \rangle^{-1}
    \mathcal{I}_{\bar{\Sigma}_{\lambda}} \cdot \cO_{\langle X
    \rangle_{\mathrm{cpt}}}
    \intertext{which also implies}
    \mathcal{I}_{\langle\Sigma\rangle,\mathrm{h}}
    &\cong \varinjlim_{\lambda \ge \lambda_0}
    \varinjlim_{\mathcal{J} \in
    \operatorname{AId}_{(\bar{X}_\lambda,X_\lambda)}}
    \Bigl(
    \langle v_{\lambda,\mathcal{J}} \rangle^{-1}
    \langle \pi_{\lambda,\mathcal{J}} \rangle^{-1}
    \mathcal{I}_{\bar{\Sigma}_{\lambda}} \cdot \cO_{\langle X
    \rangle_{\mathrm{cpt}}}
    \Bigr)_{\mathrm{h}}
  \end{align*}
  by the fact that
  h-sheafification commutes with direct limits
  \cite[\href{https://stacks.math.columbia.edu/tag/00WI}{Tag
  00WI}]{stacks-project}.
  Thus, Grothendieck's limit theorem says that the morphisms
  \begin{align*}
    \varinjlim_{\lambda \ge \lambda_0}
    \varinjlim_{\mathcal{J} \in
    \operatorname{AId}_{(\bar{X}_\lambda,X_\lambda)}}
    H^i\Bigl(\operatorname{Bl}_{\mathcal{J}}\bar{X}_\lambda,
    \pi_{\lambda,\mathcal{J}}^{-1}\mathcal{I}_{\bar{\Sigma}_\lambda}\cdot
    \cO_{\operatorname{Bl}_{\mathcal{J}}\bar{X}_\lambda}\Bigr)
    &\overset{\sim}{\longrightarrow}
    H^i\bigl(\langle X
    \rangle_{\mathrm{cpt}},\mathcal{I}_{\langle\Sigma\rangle}\bigr)\\
    \varinjlim_{\lambda \ge \lambda_0}
    \varinjlim_{\mathcal{J} \in
    \operatorname{AId}_{(\bar{X}_\lambda,X_\lambda)}}
H^i\biggl( \bigl(\operatorname{Bl}_{\mathcal{J}}\bar{X}_\lambda
      \bigr)_{\mathrm{h}},\Bigl(
      \pi_{\lambda,\mathcal{J}}^{-1}\mathcal{I}_{\bar{\Sigma}_\lambda}\cdot
      \cO_{\operatorname{Bl}_{\mathcal{J}}\bar{X}_\lambda}\Bigr)_{\mathrm{h}}
      \biggr)
    &\overset{\sim}{\longrightarrow}
    H^i\bigl(\langle X \rangle_{\mathrm{cpt},\mathrm{h}},
    \mathcal{I}_{\langle\Sigma\rangle,\mathrm{h}}\bigr)
  \end{align*}
  coming from taking direct limits of the vertical maps in
  \eqref{eq:htopcohmaplambdas} are isomorphisms.
  Combined with the surjectivity proved in the previous paragraph,
  this shows that the bottom horizonal map in \eqref{eq:htopcohmaplambdas} is
  surjective.
\end{proof}
\subsection{The key injectivity theorem}
We now prove our version of the key injectivity theorem of Kov\'acs and
Schwede.
When \(X\) is essentially of finite type over a field of characteristic zero,
a stronger statement which removes the
assumption that \(X \setminus \{x\}\) is Du Bois is known.
This stronger injectivity theorem is proved in
\cite[Theorem 3.3]{KS16deforms} when \(\Sigma =
\emptyset\), in \cite[Theorem 3.2]{KS16} for reduced pairs, and in \cite[Lemma
3.2]{MSS17} in general.
See also \cite[Theorem 6.3 and Corollary 6.5]{KK20}.
Theorem \ref{thm:keyinjintro} is the special case of statement \((\ref{thm:keyinjhtop})\)
when \(\Sigma = \emptyset\).
\begin{theorem}\label{thm:keyinj}
  Let \(X\) be a separated
  Noetherian scheme of equal characteristic zero and let \(\Sigma
  \subseteq X\) be a closed subscheme.
  Let \(\tau \in \{\mathrm{cdh},\mathrm{eh},\mathrm{sdh},\mathrm{h}\}\).
  For every \(x \in X\), consider the natural morphism
  \begin{equation}\label{eq:maponlc}
    H^i_x\bigl(\Spec(\cO_{X,x}),\mathcal{I}_{\Sigma,x}\bigr)
    \longrightarrow \mathbb{H}^i_x\bigl(\Spec(\cO_{X,x}),
    \underline{\Omega}^0_{X,\Sigma,\tau,x}\bigr)
  \end{equation}
  induced by \eqref{eq:omegafunctorial}.
  Suppose one of the following conditions holds.
  \begin{enumerate}[label=\((\roman*)\),ref=\roman*]
    \item\label{thm:keyinjhtop} \(\tau = \mathrm{h}\).
    \item\label{thm:keyinjqe} \(X\) is quasi-excellent.
  \end{enumerate}
  Suppose that \(\Spec(\cO_{X,x}) \setminus \{x\}\) is Du Bois
  with respect to the \(\tau\) topology.
  Then, \eqref{eq:maponlc} is surjective for every \(x \in X\) and every \(i\).
\end{theorem}
\begin{proof}
  By Remark \ref{rem:naturalisosehcdhrh} and Proposition
  \ref{prop:sameashyperresdef}, it suffices to show \((\ref{thm:keyinjhtop})\).
  Since the question is local, we may assume that \(X = \Spec(R)\) is the
  spectrum of a Noetherian local \(\mathbb{Q}\)-algebra \((R,\fm)\) and that \(x
  = \fm\).
  After possibly replacing \(\lambda_0\) by a larger index in \(\Lambda\), we
  may assume that \(\fm = u_{\lambda_0}^{-1}(Z_{\lambda_0})\) for a closed subset
  \(Z_{\lambda_0} \subseteq X_{\lambda_0}\).
  Set \(Z_\lambda \coloneqq u_{\lambda_0\lambda}^{-1}(Z_{\lambda_0})\) for every
  \(\lambda \ge \lambda_0\).
  \par With notation as in Setup \ref{setup:invlimcpt},
  let
  \(\Phi_\lambda \coloneqq \langle X_\lambda \rangle_{\mathrm{cpt}} \setminus
  X_\lambda\) and \(\Phi \coloneqq \langle X \rangle_{\mathrm{cpt}} \setminus X\).
  We then have the commutative diagram
  \begin{equation}\label{eq:keyinjladder}
    \begin{tikzcd}[baseline=(midrow.base)]
      H^{i-1}_{\langle X \rangle_{\mathrm{cpt}} / (\Phi \cup \{\fm\})}
      \bigl(\langle X \rangle_{\mathrm{cpt}},
      \mathcal{I}_{\langle \Sigma \rangle}\bigr) \dar \rar
      &
      \mathbb{H}^{i-1}_{\langle X \rangle_{\mathrm{cpt}} / (\Phi \cup \{\fm\})}
      \bigl(\langle X \rangle_{\mathrm{cpt}},
      \mathcal{R}\rho_{\langle X \rangle_{\mathrm{cpt}},\mathrm{h}*}
      \mathcal{I}_{\langle \Sigma \rangle,\mathrm{h}}\bigr) \dar\\
      H^i_{\Phi \cup \{\fm\}}\bigl(\langle X \rangle_{\mathrm{cpt}},
      \mathcal{I}_{\langle \Sigma \rangle}\bigr) \dar \rar
      &
      \mathbb{H}^i_{\Phi \cup \{\fm\}}\bigl(\langle X \rangle_{\mathrm{cpt}},
      \mathcal{R}\rho_{\langle X \rangle_{\mathrm{cpt}},\mathrm{h}*}
      \mathcal{I}_{\langle \Sigma \rangle,\mathrm{h}}\bigr)
      \arrow[d,""{name=midrow}]\\
      H^i\bigl(\langle X \rangle_{\mathrm{cpt}},\mathcal{I}_{\langle \Sigma
      \rangle}\bigr) \dar \rar[twoheadrightarrow]
      & \mathbb{H}^i\bigl(\langle X \rangle_{\mathrm{cpt}},
      \mathcal{R}\rho_{\langle X \rangle_{\mathrm{cpt}},\mathrm{h}*}
      \mathcal{I}_{\langle \Sigma \rangle,\mathrm{h}}\bigr) \dar\\
      H^{i}_{\langle X \rangle_{\mathrm{cpt}} / (\Phi \cup \{\fm\})}\bigl(\langle X \rangle_{\mathrm{cpt}},
      \mathcal{I}_{\langle \Sigma \rangle}\bigr) \rar
      & \mathbb{H}^{i}_{\langle X \rangle_{\mathrm{cpt}} / (\Phi \cup \{\fm\})}\bigl(\langle X \rangle_{\mathrm{cpt}},
      \mathcal{R}\rho_{\langle X \rangle_{\mathrm{cpt}},\mathrm{h}*}
      \mathcal{I}_{\langle \Sigma \rangle,\mathrm{h}}\bigr)
    \end{tikzcd}
  \end{equation}
  with exact columns, where \(\rho_{\langle X
  \rangle_{\mathrm{cpt}},\mathrm{h}}\colon \langle X
  \rangle_{\mathrm{cpt},\mathrm{h}} \to \langle X \rangle_{\mathrm{cpt}}\)
  is the projection morphism from the inverse limit of h sites to
  the inverse limit of Zariski sites.
  The third horizontal map is surjective by Theorem \ref{thm:limitofhodge}.
  \par We now
  show that the top and bottom horizontal maps in \eqref{eq:keyinjladder}
  are isomorphisms.
  We claim we have the chain of isomorphisms
  \begin{align*}
    H^{i}_{\langle X \rangle_{\mathrm{cpt}} / (\Phi \cup
    \{\fm\})}\bigl(\langle X \rangle_{\mathrm{cpt}},\mathcal{I}_{\langle \Sigma
    \rangle}\bigr)
    &\cong
    \varinjlim_{\lambda \ge \lambda_0}
    H^{i}\Bigl(\langle X \rangle_{\mathrm{cpt}} \setminus \bigl(
        \langle u_{\lambda}
    \rangle^{-1}(\Phi_\lambda) \cup \{\fm\}\bigr),\mathcal{I}_{\langle
    \Sigma \rangle}\Bigr)\\
    &\cong
    \varinjlim_{\lambda \ge \lambda_0}
    \varinjlim_{\mu \ge \lambda}
    H^{i}\Bigl(\langle X_\mu \rangle_{\mathrm{cpt}} \setminus \bigl(
        \langle u_{\lambda\mu}
      \rangle^{-1}(\Phi_\lambda \cup \overline{Z_\lambda})\bigr),\mathcal{I}_{\langle
    \Sigma_\mu \rangle}\Bigr)\\
    &\cong
    \varinjlim_{\mu \ge \lambda_0}
    H^{i}\Bigl(\langle X_\mu \rangle_{\mathrm{cpt}} \setminus (
      \Phi_\mu \cup \overline{Z_\mu}),\mathcal{I}_{\langle
    \Sigma_\mu \rangle}\Bigr)\\
    &=
    \varinjlim_{\mu \ge \lambda_0}
    H^{i}\bigl(X_\mu \setminus Z_\mu,\mathcal{I}_{\Sigma_\mu}\bigr)\\
    &\cong
    H^{i}\bigl(X \setminus \{\fm\},\mathcal{I}_{\Sigma}\bigr).
  \end{align*}
  The first isomorphism holds by \cite[Motif D on p.\ 221]{Har66}, and also
  holds for the complexes \(\mathcal{R}\rho_{\langle X
  \rangle_{\mathrm{cpt}},\mathrm{h}*} \mathcal{I}_{\langle \Sigma
  \rangle,\mathrm{h}}\).
  The second isomorphism holds by Grothendieck's limit theorem
  \cite[Expos\'e VI, Corollaire 8.7.7 and Remarque 8.7.8]{SGA42} where
  the transition morphisms are those
  induced by the morphisms
  \[
    \langle X_\mu \rangle_{\mathrm{cpt}} \setminus\Phi_\mu \cup \overline{Z_\mu} 
    \hooklongrightarrow
    \langle X_\mu \rangle_{\mathrm{cpt}} \setminus \langle u_{\lambda\mu}
    \rangle^{-1}(\Phi_\lambda \cup \overline{Z_\lambda})
    \longrightarrow
    \langle X_\lambda \rangle_{\mathrm{cpt}} \setminus
    (\Phi_\lambda \cup \overline{Z_\lambda})
  \]
  of locally ringed spaces.
  The third isomorphism holds
  since the two direct systems are cofinal.
  The fourth equality holds since
  \(X_\mu = \langle X_\mu \rangle_{\mathrm{cpt}} \setminus \Phi_\mu\).
  The last isomorphism holds by Grothendieck's limit theorem
  \cite[Expos\'e VI, Corollaire 8.7.7 and Remarque 8.7.8]{SGA42}.
  The analogoous chain holds for the correspoding sheaves on the h topology.
  In both topologies, we use the fact that the ideal sheaves on \(\langle X
  \rangle_{\mathrm{cpt}}\) and \(\langle X
  \rangle_{\mathrm{cpt},\mathrm{h}}\) are direct limits of the corresponding
  ideal sheaves on \(\langle X_\lambda \rangle_{\mathrm{cpt}}\) and \(\langle
  X_\lambda \rangle_{\mathrm{cpt},\mathrm{h}}\) (see the last paragraph of the
  proof of Theorem \ref{thm:limitofhodge}).
  Thus, the top and  bottom horizontal maps in \eqref{eq:keyinjladder} are
  isomorphisms by the
  assumption that \((X,\Sigma)\) is Du Bois away from \(\fm\).
  \par By the four lemma, we see that the second horizontal map in
  \eqref{eq:keyinjladder} is surjective.
  Our next step is to realize the map \eqref{eq:maponlc} as a direct
  summand of the second horizontal map in \eqref{eq:keyinjladder}.
  By \cite[Motif D on p.\ 219]{Har66},
  the Mayer--Vietoris sequence \cite[Chapter II, Proof of Proposition
  4.2]{Har75}, and the fact that \(\fm \notin Z\),
  the second horizontal map can be identified with the
  direct sum
  \[
    \begin{tikzcd}[row sep=1em,column sep=1.475em]
      H^i_\Phi\bigl(\langle X \rangle_{\mathrm{cpt}},
      \mathcal{I}_{\langle \Sigma \rangle}\bigr)
      \rar\dar[phantom]{\oplus}
      & \mathbb{H}^i_\Phi\bigl(\langle X \rangle_{\mathrm{cpt}},
      \mathcal{R}\rho_{\langle X \rangle_{\mathrm{cpt}},\mathrm{h}*}
      \mathcal{I}_{\langle \Sigma \rangle,\mathrm{h}}\bigr)
      \dar[phantom]{\oplus}\\
      H^i_{\fm}\bigl(\langle X \rangle_{\mathrm{cpt}},
      \mathcal{I}_{\langle \Sigma \rangle}\bigr) \rar
      & \mathbb{H}^i_{\fm}\bigl(\langle X \rangle_{\mathrm{cpt}},
      \mathcal{R}\rho_{\langle X \rangle_{\mathrm{cpt}},\mathrm{h}*}
      \mathcal{I}_{\langle \Sigma \rangle,\mathrm{h}}\bigr)
    \end{tikzcd}
  \]
  and hence the maps on each direct summand are also surjective.
  By Excision \cite[Proposition 1.3]{Gro67}, the fact that 
  \[
    X_\mathrm{h} = \varprojlim_{\lambda \in \Lambda} X_{\lambda,\mathrm{h}}
  \]
  as sites by \cite[Lemma 3.6]{He24}, and the fact that the h
  topology on a scheme is compatible with localization,
  the surjective map on local cohomology with support in \(\fm\)
  can be identified with the
  surjection
  \[
    H^i_{\fm}(X,
    \mathcal{I}_{\Sigma}) \longtwoheadrightarrow
    H^i_{\fm}\bigl(X_{\mathrm{h}},
    \mathcal{I}_{\Sigma,\mathrm{h}}\bigr).
  \]
  Under the isomorphism
  \[
    \underline{\Omega}^0_{X,\Sigma,\mathrm{h}}
    \cong \mathcal{R}\rho_{X,\mathrm{h}*}\mathcal{I}_{\Sigma,\mathrm{h}},
  \]
  we obtain the desired surjection in Theorem \ref{thm:keyinjintro}.
\end{proof}

\subsection{A characterization of Du Bois pairs}
\par We can now show the following characterization of Du Bois pairs inspired by the
proofs of \cite[Theorem 5.4]{Kov11} and \cite[Theorem 2.5]{Kov12}.
See also \citeleft\citen{Kov99}\citemid Lemma 2.2\citepunct
\citen{Kov00}\citemid Corollary 1.5\citepunct
\citen{BST17}\citemid Theorem 4.8\citeright\ for related statements when \(\Sigma_X =
\emptyset\).
\begin{theorem}\label{thm:splittingcriterion}
  Let \(X\) be a separated Noetherian scheme of equal characteristic zero.
  Consider a closed subscheme \(\Sigma_X \subseteq X\).
  The following are equivalent:
  \begin{enumerate}[label=\((\roman*)\),ref=\roman*]
    \item\label{thm:splittingcriteriondb} \((X,\Sigma_X)\) has Du Bois
      singularities with respect to the h topology.
    \item\label{thm:splittingcriterionsplits}
      The natural morphism \(\mathcal{I}_{\Sigma_X} \to
      \underline{\Omega}^0_{X,\Sigma_X,\mathrm{h}}\) admits a left inverse in \(D(X)\).
    \item\label{thm:splittingcriterioninjonlc}
      \((X,\Sigma_X)\) is h-injective, that is,
      for every \(x \in X\), the natural morphism
      \begin{equation}\label{eq:naturalmaponlc}
        H^i_x\bigl(\Spec(\cO_{X,x}),\mathcal{I}_{\Sigma_X,x}\bigr)
        \longrightarrow \mathbb{H}^i_x\bigl(\Spec(\cO_{X,x}),
        \underline{\Omega}^0_{X,\Sigma_X,\mathrm{h},x}\bigr)
      \end{equation}
      is injective.
  \end{enumerate}
  If \(X\) is quasi-excellent, then these conditions are also equivalent to:
  \begin{enumerate}[label=\((\roman*)\),ref=\roman*,resume]
    \item\label{thm:splittingcriterioninjonlctau}
      \((X,\Sigma_X)\) is \(\tau\)-injective
      for some (resp.\ every) \(\tau \in
      \{\mathrm{cdh},\mathrm{eh},\mathrm{sdh},\mathrm{h}\}\).
  \end{enumerate}
\end{theorem}
\begin{proof}
  \((\ref{thm:splittingcriteriondb}) \Rightarrow
  (\ref{thm:splittingcriterionsplits})\) follows by the definition of a Du Bois
  pair, and \((\ref{thm:splittingcriterionsplits}) \Rightarrow
  (\ref{thm:splittingcriterioninjonlc})\) holds because the morphism
  \eqref{eq:naturalmaponlc} also admits a left inverse.\smallskip
  \par \((\ref{thm:splittingcriterioninjonlc}) \Rightarrow
  (\ref{thm:splittingcriteriondb})\).
  Suppose that \((X,\Sigma_X)\) is not Du Bois.
  Let \(x \in X\) be a point that is maximal among all points at which
  \((X,\Sigma_X)\) is not Du Bois.
  After localizing at \(x\), we may assume that \((X \setminus
  \{x\},\Sigma_X\rvert_{X \setminus \{x\}})\) is Du
  Bois and that \(X = \Spec(R)\) is the spectrum of a local ring \((R,\fm)\)
  where \(x = \fm\).
  By the hypothesis in \((\ref{thm:splittingcriterioninjonlc})\) and by Theorem
  \ref{thm:keyinj}, we know that \eqref{eq:naturalmaponlc}
  is an isomorphism for all \(i\).
  \par We now base change to the completion to show
  \((\ref{thm:splittingcriteriondb})\).
  This is necessary to ensure the existence of dualizing complexes, in which
  case we can apply local duality.
  Let
  \(\hat{\mathcal{I}}_{\Sigma_X} = \mathcal{I}_{\Sigma_X}
  \otimes_{R} \hat{R}\) and \(\hat{\Omega} =
  \underline{\Omega}^0_{X,\Sigma_X,\mathrm{h},x} \otimes_{R} \hat{R}\).
  Denote by \(\hat{\omega}^\bullet\) a dualizing complex on \(\hat{R}\).
  Applying Grothendieck local duality \cite[Chapter III, Theorem 6.2]{Har66} on \(\hat{R}\),
  we see that
  \[
    \operatorname{\mathcal{RH}\kern-1pt\mathit{om}}_{\hat{R}}(
    \hat{\Omega},\hat{\omega}^\bullet) \longrightarrow
    \operatorname{\mathcal{RH}\kern-1pt\mathit{om}}_X\bigl(
    \hat{\mathcal{I}}_{\Sigma_X},\hat{\omega}^\bullet\bigr)
  \]
  is a quasi-isomorphism.
  Since \(\hat{\omega}^\bullet\) is a dualizing complex and \(R \to \hat{R}\) is
  faithfully flat,
  we see that
  \(\mathcal{I}_{\Sigma_X} \to
  \underline{\Omega}^0_{X,\Sigma_X}\) is a quasi-isomorphism,
  that is, \((X,\Sigma_X)\) has Du Bois singularities.\smallskip
  \par Finally, when \(X\) is quasi-excellent,
  \((\ref{thm:splittingcriterioninjonlc})
  \Leftrightarrow (\ref{thm:splittingcriterioninjonlctau})\) by Remark
  \ref{rem:naturalisosehcdhrh} and Proposition
  \ref{prop:sameashyperresdef} (or by repeating the proof in the previous
  paragraph using the quasi-excellent case in Theorem
  \ref{thm:keyinj}).
\end{proof}
\section{Proofs of main theorems}\label{sect:mainresultproofs}
We prove a version of \citeleft\citen{Kov99}\citemid Corollary 2.4\citepunct
\citen{KK10}\citemid Theorem 1.6\citepunct
\citen{Kov12}\citemid Theorem 3.3\citeright\ 
that replaces splitting conditions with conditions on injectivity of maps on
local cohomology.
Note that \citeleft\citen{KK10}\citemid Theorem 1.6\citepunct
\citen{Kov12}\citemid Theorem 3.3\citeright\
assume that \(f\) is proper, which we do not need in the
statement below.
\begin{proposition}\label{thm:dbdescends}
  Let \(f\colon Y \to X\) be a morphism between separated Noetherian schemes.
  Let \(\tau \in
  \{\mathrm{cdh},\mathrm{eh},\mathrm{sdh},\mathrm{h}\}\).
  Consider a closed subscheme \(\Sigma_X \subseteq X\) and set
  \(\Sigma_{Y} \coloneqq f^{-1}(\Sigma_X)\).
  Assume one of the following conditions holds.
  \begin{enumerate}[label=\((\roman*)\),ref=\roman*]
    \item\label{thm:dbdescendsmaxpt}
      For every \(x \in X\), there exists a maximal point \(y \in
      f^{-1}(x)\) such that the natural morphism
      \[
        H^i_x\bigl(\Spec(\cO_{X,x}),\mathcal{I}_{\Sigma_X,x}\bigr)
        \longrightarrow H^i_y\bigl(\Spec(\cO_{Y,y}),
        \mathcal{I}_{\Sigma_Y,y}\bigr)
      \]
      is injective.
    \item\label{thm:dbdescendslocalize}
      \(Y\) is of equal characteristic zero, \(\tau = \mathrm{h}\),
      and for every \(x \in X\),
      the natural morphism
      \begin{equation}\label{eq:dbdescendsinjectivity}
        H^i_x\bigl(\Spec(\cO_{X,x}),\mathcal{I}_{\Sigma_X,x}\bigr)
        \longrightarrow \mathbb{H}^i_x\bigl(\Spec(\cO_{X,x}),
        (\mathcal{R} f_*\mathcal{I}_{\Sigma_{Y}})_x\bigr)
      \end{equation}
      is injective.
  \end{enumerate}
  If \((Y,\Sigma_{Y})\) is \(\tau\)-injective, then \((X,\Sigma_X)\) is
  \(\tau\)-injective.
  In particular, if \(Y\) is of equal characteristic zero and
  \((Y,\Sigma_Y)\) has Du Bois singularities with respect to the h topology, then:
  \begin{itemize}
    \item \((X,\Sigma_X)\) has Du Bois singularities with respect to the h topology; and
    \item \(X\) has Du Bois singularities with respect to the h topology
      if and only if \(\Sigma_X\) has Du Bois singularities with respect
      to the h topology.
  \end{itemize}
\end{proposition}
\begin{proof}
  Under each respective assumption,
  we have the commutative diagrams
  \[
    \begin{tikzcd}
      \mathbb{H}^i_x\bigl(\Spec(\cO_{X,x}),
      \underline{\Omega}^0_{X,\Sigma_X,\tau,x}\bigr)
      \rar & 
      \mathbb{H}^i_y\bigl(\Spec(\cO_{Y,y}),
      \underline{\Omega}^0_{Y,\Sigma_Y,\tau,y}\bigr)\\
      H^i_x\bigl(\Spec(\cO_{X,x}),\mathcal{I}_{\Sigma_X, x}\bigr)
      \rar[hook] \uar & 
      H^i_y\bigl(\Spec(\cO_{Y,y}),
      \mathcal{I}_{\Sigma_Y,y}\bigr)
      \arrow[hook]{u}
    \end{tikzcd}
  \]
  and
  \[
    \begin{tikzcd}
      \mathbb{H}^i_x\bigl(\Spec(\cO_{X,x}),
      \underline{\Omega}^0_{X,\Sigma_X,\mathrm{h},x}\bigr)
      \rar & 
      \mathbb{H}^i_x\Bigl(\Spec(\cO_{X,x}),
      \bigl(\mathcal{R} f_*\underline{\Omega}^0_{Y,\Sigma_{Y},\mathrm{h}}\bigr)_x\Bigr)\\
      H^i_x\bigl(\Spec(\cO_{X,x}),\mathcal{I}_{\Sigma_X, x}\bigr)
      \rar[hook] \uar & 
      \mathbb{H}^i_x\Bigl(\Spec(\cO_{X,x}),
      \bigl(\mathcal{R} f_*\mathcal{I}_{\Sigma_{Y}}\bigr)_x\Bigr)
      \arrow{u}[sloped,below]{\sim}
    \end{tikzcd}
  \]
  respectively
  for every \(i\), where the right vertical arrow is injective (resp.\ an
  isomorphism) by the assumption that \((Y,\Sigma_Y)\) is \(\tau\)-injective
  (resp.\ the assumption that \((Y,\Sigma_Y)\) is \(\tau\)-injective and by
  Theorem \ref{thm:splittingcriterion}), and the bottom
  horizontal arrow is injective by hypothesis.
  By the commutativity of the diagrams, we see that the left vertical arrow is
  injective for every \(i\).
  The statements for Du Bois singularities now follow from
  Theorem \ref{thm:splittingcriterion} and by considering the exact triangle of
  Du Bois defects (see \cite[Definition 3.11]{Kov11})
  \[
    \underline{\Omega}_{X,\Sigma_X}^\times \longrightarrow
    \underline{\Omega}_X^\times \longrightarrow
    \underline{\Omega}_{\Sigma_X}^\times \xrightarrow{+1}.\qedhere
  \]
\end{proof}
Using Proposition \ref{thm:dbdescends},
we can prove a more general version of Theorem \ref{thm:boutotdb} for pairs.
The statement \((\ref{cor:dbdescendssplit})\) below is a version of
\citeleft\citen{Kov99}\citemid Corollary 2.4\citepunct
\citen{KK10}\citemid Theorem 1.6\citepunct
\citen{Kov12}\citemid Theorem 3.3\citeright.
\par By \cite[Theorem 4.8]{BST17} and its proof (see
\citeleft\citen{BST17}\citemid Theorem 3.3\citepunct
\citen{BS17}\citemid Theorem 4.1\((i)\)\citepunct
\citen{stacks-project}\citemid
\href{https://stacks.math.columbia.edu/tag/0EVW}{Tag
0EVW}\citeright), statements \((\ref{cor:dbdescendsfflat})\)
and \((\ref{cor:yamstar})\) give a new proof that \(F\)-injectivity descends
under faithfully flat maps or maps that localize to pure maps
\cite[Theorem 3.8 and Proposition 3.9]{DM}.
We will use the following terminology in statement
\((\ref{cor:dbdescendspartiallypure})\) below.
\begin{definition}[cf.\ {\cite[p.\ 38]{CGM16}}]
  Let \(f\colon Y \to X\) be a morphism of locally Noetherian schemes.
  For a point \(x \in X\), we say that \(f\) is \textsl{partially pure at \(x\)}
  if there exists a point \(y \in f^{-1}(x)\) such that \(\cO_{X,x} \to
  \cO_{Y,y}\) is pure.
\end{definition}
\begin{theorem}\label{cor:dbdescends}
  Let \(f\colon Y \to X\) be a surjective morphism between Noetherian schemes.
  Let \(\tau \in
  \{\mathrm{rh},\mathrm{cdh},\mathrm{eh},\mathrm{sdh},\mathrm{h}\}\).
  Consider a closed subscheme \(\Sigma_X \subseteq X\) and set
  \(\Sigma_{Y} \coloneqq f^{-1}(\Sigma_X)\).
  Assume one of the following conditions holds.
  \begin{enumerate}[label=\((\roman*)\),ref=\roman*]
    \item\label{cor:dbdescendsfflat} \(f\) is faithfully flat.
    \item\label{cor:yamstar}
      \(\Sigma_X = \emptyset\) and for every \(x \in X\), there is a maximal
      point \(y \in f^{-1}(x)\) such that
      \(\cO_{X,x} \to \cO_{Y, y}\) is pure (cf.\ \emph{\cite[Definition
      5.7]{Yam}}).
    \item\label{cor:dbdescendssplit}
      \(Y\) is of equal characteristic zero, \(\tau = \mathrm{h}\), and
      the natural morphism \(\mathcal{I}_{\Sigma_X} \to \mathcal{R}
      f_*\mathcal{I}_{\Sigma_Y}\) admits a left inverse in \(D(X)\).
    \item\label{cor:dbdescendspure}
      \(Y\) is of equal characteristic zero, \(\tau = \mathrm{h}\),
      \(f\) is affine, and for every affine open subset \(U \subseteq X\),
      the \(H^0(U,\cO_X)\)-module map
      \[
        H^0\bigl(U,\mathcal{I}_{\Sigma_X}\bigr) \longrightarrow
        H^0\bigl(f^{-1}(U),\mathcal{I}_{\Sigma_Y}\bigr)
      \]
      is pure.
    \item\label{cor:dbdescendspartiallypure}
      \(Y\) is of equal characteristic zero, \(\tau = \mathrm{h}\),
      \(\Sigma_X = \emptyset\), and
      \(f\) is partially pure at every \(x \in X\).
  \end{enumerate}
  If \((Y,\Sigma_{Y})\) is \(\tau\)-injective, then \((X,\Sigma_X)\) is
  \(\tau\)-injective.
  In particular, if \((Y,\Sigma_Y)\) is of equal characteristic zero and has Du Bois
  singularities with respect to the h topology and one of the conditions
  \((\ref{cor:dbdescendsfflat})\)--\kern1pt\((\ref{cor:dbdescendspartiallypure})\) holds, then:
  \begin{itemize}
    \item \((X,\Sigma_X)\) has Du Bois singularities with respect to the h topology; and
    \item \(X\) has Du Bois singularities with respect to the h topology
      if and only if \(\Sigma_X\) has Du Bois singularities with respect
      to the h topology.
  \end{itemize}
\end{theorem}
\begin{proof}
  \par For \((\ref{cor:dbdescendsfflat})\) and
  \((\ref{cor:yamstar})\), we show that the hypothesis in Proposition
  \ref{thm:dbdescends}\((\ref{thm:dbdescendsmaxpt})\) is satisfied.
  For \((\ref{cor:dbdescendsfflat})\), let \(x \in X\) be a point
  and let \(y \in Y\) such that \(f(y) = x\).
  The morphism \(\Spec(\cO_{Y,y}) \to \Spec(\cO_{X,x})\) is faithfully flat, and
  hence
  \[
    \mathcal{I}_{\Sigma_X,x} \longrightarrow \mathcal{I}_{\Sigma_X,x}
    \otimes_{\cO_{X,x}} \cO_{Y,y} \simeq \mathcal{I}_{\Sigma_Y,y}
  \]
  is pure by 
  \cite[p.\ 136]{HR74}.
  For \((\ref{cor:yamstar})\), we choose \(y\) as in the assumption.
  In either situation,
  the associated map on local cohomology is injective by
  \cite[Corollary 3.2\((a)\)]{Kem79}.\smallskip
  \par For \((\ref{cor:dbdescendssplit})\) and \((\ref{cor:dbdescendspure})\), we
  show that the hypothesis in Proposition
  \ref{thm:dbdescends}\((\ref{thm:dbdescendslocalize})\) is satisfied.
  For \((\ref{cor:dbdescendssplit})\), it suffices to note that the morphism
  \eqref{eq:dbdescendsinjectivity} admits a left inverse.
  For \((\ref{cor:dbdescendspure})\), the morphism
  \eqref{eq:dbdescendsinjectivity} is
  \[
    H^i_x\Bigl(\Spec(\cO_{X,x}),H^0\bigl(U,\mathcal{I}_{\Sigma_X}\bigr)_x\Bigr)
    \longrightarrow H^i_x\Bigl(\Spec(\cO_{X,x}),H^0\bigl(f^{-1}(U),
    \mathcal{I}_{\Sigma_Y}\bigr)_x\Bigr)
  \]
  where \(U\) is an affine open containing \(x\).
  This map is injective by \cite[Corollary 3.2\((a)\)]{Kem79}.\smallskip
  \par For \((\ref{cor:dbdescendspartiallypure})\), let \(x \in X\) be a point.
  By assumption, there exists
  \(y \in Y\) such that \(f(y) = x\) and
  the map \(\cO_{X,x} \to \cO_{Y,y}\) is pure,
  in which case \((\ref{cor:dbdescendspure})\) applies to the morphism
  \(\Spec(\cO_{Y,y}) \to \Spec(\cO_{X,x})\).
\end{proof}
We single out a special case of Theorem \ref{cor:dbdescends}, which generalizes
\cite[Corollary 2.5]{Kov99}.
See \citeleft\citen{EGAIV3}\citemid D\'efinition 13.2.2 and D\'efinition
13.3.2\citepunct \citen{EGAIV4}\citemid Err\textsubscript{IV}, 34 and
35\citeright\ for the definition of locally
equidimensional morphisms.
Finite dominant morphisms of integral schemes
are locally equidimensional.

\begin{corollary}\label{cor:finitesurjdb}
  Let \(f\colon Y \to X\) be a locally equidimensional
  surjective morphism between integral
  Noetherian schemes of equal characteristic zero.
  Suppose that \(Y\) has Du Bois singularities with respect to the h
  topology and that \(X\) is normal.
  Then, \(X\) has Du Bois singularities with respect to the h
  topology.
\end{corollary}
\begin{proof}
  Let \(x \in X\) be a point.
  Replacing \(X\) by an affine open neighborhood of \(x\) and shrinking \(Y\),
  we may assume that
  \(X = \Spec(R)\) and \(Y = \Spec(S)\) are affine.
  Since \(R\) is a normal \(\QQ\)-algebra, the inclusion \(R \hookrightarrow S\)
  is pure \cite[Proof of Corollary A.2]{Zhuang}.
  We can now apply Theorem \ref{cor:dbdescends}\((\ref{cor:dbdescendssplit})\).
\end{proof}
Finally, we show Theorem \ref{thm:boutotdb} and Corollary \ref{cor:boutotlcqgor}.
\begin{proof}[Proof of Theorem \ref{thm:boutotdb}]
  Set
  \[
    (X,\Sigma_X) = \bigl(\Spec(R),\emptyset\bigr) \qquad \text{and} \qquad
    (Y,\Sigma_Y) = \bigl(\Spec(S),\emptyset\bigr).
  \]
  Since \(R \to S\) is cyclically pure, the map \(Y \to X\) is surjective.
  Next, \(S\) is reduced since it has Du Bois singularities with respect to the
  h topology, and hence the
  subring \(R\) of \(S\) is reduced.
  As a consequence, \(R \to S\) is pure by \cite[Proposition 1.4 and Theorem
  1.7]{Hoc77}.
  Thus, Theorem \ref{cor:dbdescends}\((\ref{cor:dbdescendspure})\) applies.
\end{proof}
\begin{proof}[Proof of Corollary \ref{cor:boutotlcqgor}]
  For \((\ref{cor:boutotlcqgorcartier})\), note that if \(S\) has log canonical
  type singularities, then it is normal (by
  definition) and Du Bois (by \cite[Theorem 1.4]{KK10}).
  It therefore suffices to consider the case when \(S\) is normal and Du
  Bois.
  In this case, \(R\) is normal (by \cite[Proposition 6.15\((b)\)]{HR76}) and Du Bois
  (by Theorem \ref{thm:boutotdb}).
  Finally, since \(K_R\) is Cartier, we see that \(R\) being Du Bois implies
  that \(R\) has log canonical singularities \cite[Theorem
  3.3]{Kov99}.\smallskip
  \par For \((\ref{cor:boutotlcqgorqcartier})\), we note that \(R\) is reduced
  since it is a subring of \(S\), and hence \(R\to S\) is pure
  by \cite[Proposition 1.4 and Theorem 1.7]{Hoc77}.
  Denote by \(f\colon Y \to X\)
  the morphism of affine schemes associated to \(R \to S\).
  We adapt the proof of \cite[Lemma 2.3]{Zhuang}.
  We want to show that \(X\) has log canonical singularities at every closed
  point \(x \in X\).
  Since the statement is local at \(x\), we can replace \(X\) with an affine
  open neighborhood of \(x\) to assume that \(\cO_X(rK_X) \cong \cO_X\), where
  \(r\) is the Cartier index of \(K_X\) at \(x\).
  \par Let \(s\in H^0(X,\cO_X(rK_X))\) be a nowhere vanishing section, and let
  \(\pi\colon X' \to X\) be the corresponding index 1 cover as in
  \cite[Definition 5.19]{KM98}.
  Let \(Y'\) be the normalization of the components of \(X' \times_X Y\)
  dominating \(X'\), and denote by \(\pi'\) the composition \(Y' \to X' \times_X Y
  \to Y\).
  Let \(U\) be the Cartier locus of \(K_X\), let \(V = f^{-1}(U)\), and let \(U'\)
  (resp.\ \(V'\)) be the preimage of \(U\) (resp.\ \(V\)) in \(X'\) (resp.\
  \(Y'\)).
  Then, \(\pi\) and \(\pi'\) are \'etale over \(U\) and \(V\), respectively
  (see \cite[Definition 2.49]{KM98}), and
  \begin{align*}
    \pi_*\cO_{U'} &= \left. \Biggl( \bigoplus_{m \in \NN} \cO_U(mK_U) \cdot
    t^m\Biggr)\middle/ (st^r-1)\right.\\
    \pi'_*\cO_{V'} &= \left. \Biggl( \bigoplus_{m \in \NN} \cO_V(mf^*K_U) \cdot
    t^m\Biggr)\middle/ (f^*s\,t^r-1)\right.
  \end{align*}
  Here, in the second equality, \(f^*s\) denotes the pullback of \(s\) to
  \(H^0(V,\cO_V(rf^*K_U))\).
  By assumption, the complement of \(U\) (resp.\ \(V\)) in \(X\) (resp.\ \(Y\))
  has codimension at least two.
  We therefore have
  \begin{alignat*}{3}
    H^0(X',\cO_{X'}) &= H^0(U,\pi_*\cO_{U'})
    &{}=& \left. \Biggl( \bigoplus_{m \in \NN} H^0\bigl(U,\cO_U(mK_U)\bigr) \cdot
    t^m\Biggr)\middle/ (st^r-1)\right.\\
    H^0(Y',\cO_{Y'}) &= H^0(V,\pi'_*\cO_{V'})
    &{}=& \left. \Biggl( \bigoplus_{m \in \NN} H^0\bigl(V,\cO_V(mf^*K_U)\bigr) \cdot
    t^m\Biggr)\middle/ (f^*s\,t^r-1)\right.
  \end{alignat*}
  By \cite[Lemma 2.2]{Zhuang}, the ring map \(H^0(X',\cO_{X'}) \to
  H^0(Y',\cO_{Y'})\) is pure.
  Since \(Y' \to Y\) is \'etale in codimension one (it is \'etale over \(V\)),
  we know \(Y'\) has log canonical type singularities \cite[Proposition 5.20]{KM98}.
  By \((\ref{cor:boutotlcqgorcartier})\), this implies \(X'\) has log
  canonical singularities.
  Finally, since \(X' \to X\) is \'etale in codimension one (it is \'etale over
  \(U\)), we see that \(X\) has log canonical singularities \cite[Proposition
  5.20]{KM98}.
\end{proof}

\end{document}